\numberwithin{equation}{section}
\numberwithin{equation}{section}
\newtheorem{theorem}{Theorem}[section]
\newtheorem*{theorem*}{Theorem}
\newtheorem{definition}{Definition}[section]
\newtheorem{corollary}{Corollary}[section]
\newtheorem{proposition}{Proposition}[section]
\newtheorem{lemma}{Lemma}[section]
\newtheorem{remark}{Remark}[section]
\begin{document}

\title{The energy conservation and regularity for the Navier-Stokes equations}

\author{ \footnote{Email Address: tanwenkeybfq@163.com (W. Tan), \quad  mcsyzy@mail.sysu.edu.cn (Z. Yin).}
Wenke $\mbox{Tan}^1$  \quad and \quad Zhaoyang $\mbox{Yin}^{2,3}$ \\
 \small $^1\mbox{Key}$ Laboratory of Computing and Stochastic Mathematics (Ministry of Education), School of \\\small Mathematics and Statistics, Hunan Normal University, Changsha, Hunan 410081, P. R. China  \\ \small $^2\mbox{Department}$ of Mathematics, Sun Yat-sen University, Guangzhou, 510275, China\\
\small $^3\mbox{Faculty}$ of Information Technology,
Macau University of Science and Technology, Macau, China}

\date{}
\maketitle

\begin{abstract}
In this paper, we consider the energy conservation and regularity of the weak solution $u$ to the Navier-Stokes equations in the endpoint case. We first construct a divergence-free field $u(t,x)$ which satisfies $\lim_{t\to T}\sqrt{T-t}||u(t)||_{BMO}<\infty$ and $\lim_{t\to T}\sqrt{T-t}||u(t)||_{L^\infty}=\infty$ to demonstrate that the Type II singularity is admissible in the endpoint case $u\in L^{2,\infty}(BMO)$. Secondly, we prove that if a suitable weak solution $u(t,x)$ satisfying $||u||_{L^{2,\infty}([0,T];BMO(\Omega))}<\infty$ for arbitrary $\Omega\subseteq\mathbb{R}^3$ then the local energy equality is valid on $[0,T]\times\Omega$. As a corollary, we also prove $||u||_{L^{2,\infty}([0,T];BMO(\mathbb{R}^3))}<\infty$ implies the global energy equality on $[0,T]$. Thirdly, we show that as the solution $u$ approaches a finite blowup time $T$, the norm $||u(t)||_{BMO}$ must blow up at a rate faster than $\frac{c}{\sqrt{T-t}}$ with some absolute constant $c>0$. Furthermore, we prove that if $||u_3||_{L^{2,\infty}([0,T];BMO(\mathbb{R}^3))}=M<\infty$ then there exists a small constant $c_M$ depended on $M$ such that if $||u_h||_{L^{2,\infty}([0,T];BMO(\mathbb{R}^3))}\leq c_M$ then $u$ is regular on $(0,T]\times\mathbb{R}^3$.
\end{abstract}

\noindent \textit{Key words}: Navier-Stokes equations, Suitable weak solutions, Energy equality, Regularity.

\section{Introduction}
~~~~We consider the incompressible Navier-Stokes in the domain $\Omega\subseteq\mathbb{R}^3$
\begin{equation}\label{NS}
 \left\{\begin{array}{ll}
\partial_tu-\Delta u+u\cdot\nabla u+\nabla P=0,\\
\nabla\cdot u=0,\\
u(x,t)|_{\partial\Omega}=0,\\
u(0,x)=u_0(x)
\end{array}\right.
\end{equation}
where the unknowns $u$, $P$ denote the velocity vector field, pressure respectively.

It is well-known that if $u_0$ is smooth enough, then problems \eqref{NS} have a unique solution on $[0,T)$ for some $T>0$; see, for example, \cite{Leray,Fujita,Kaniel,Kiselev,Wahl} and the references therein.
The concepts of weak solutions of \eqref{NS} and their regularity were already introduced in the fundamental paper of Leray \cite{Leray}. Pioneering works of Leray \cite{Leray} and Hopf \cite{Hopf} showed the global existence of a weak solution called Leray-Hopf solution. In three dimensions, however, the question of regularity and uniqueness of weak solutions is an outstanding open problem in mathematical fluid mechanics.

On the one hand, in Leray's paper \cite{Leray}, he proved that if $[0,T)$ is the maximal existence interval of a smooth solution, then for $p > 3$, there exists $c_p>0$ such that
\begin{align*}
||u(t,\cdot)||_{L^p(\mathbb{R}^3)}\geq\frac{c_p}{|T-t|^{\frac{p-3}{2p}}}.
\end{align*} This means that as the solution $u$ approaches a finite blowup time $T$, the norm $||u(t)||_{L^p}$ with $3<p$ must blow up at a rate $\frac{c_p}{|T-t|^{\frac{p-3}{2p}}}$ with some absolute constant $c_p>0$. In general, if u satisfies
\begin{align*}
||u(t,\cdot)||_{L^p(\mathbb{R}^3)}\leq \frac{C}{|T-t|^{\frac{p-3}{2p}}},
\end{align*}the regularity of the solution at $t = T$ remains unknown. The well-known Ladyzhenskaya-Prodi-Serrin criteria \cite{Ladyzhenskaya2,Prodi,Serrin1}showed that if $u\in L^r([0,T],L^s(\mathbb{R}^3))$ for $\frac{2}{r}+\frac{3}{s}\leq1,s>3$ then $u$ is regular on $[0,T]$. For the endpoint case $p =3$, Escauriaza, Serengin and Sverak \cite{Escauriaza} proved that the $L^\infty L^3$ solutions are smooth. This result was improved by Tao \cite{Tao} showed that as the solution $u$ approaches a finite blowup time $T$, the critical norm $||u(t)||_{L^3}$ must blow up at a rate $(\log\log\log\frac{1}{T-t})^{c}$ with some absolute constant $c>0$. The other endpoint case $p=\infty$ was improved by Kozono and Taniuchi \cite{Kozono} proved that $u\in L^2([0,T];BMO(\mathbb{R}^3))$ implies the regularity of the solution $u$ to \eqref{NS}. Since the condition $||u(t)||_{L^p(\mathbb{R}^3)}\leq\frac{c_p}{|T-t|^\frac{p-3}{2p}}$ only requires $u\in L^{q,\infty}(L^p)$ for $\frac{3}{p}+\frac{2}{q}=1,p\geq3$, it is natural to generalize the classical Ladyzhenskaya-Prodi-Serrin type criterion into Lorentz spaces. In \cite{Kozono1}, Kim and Kozono proved the local boundedness of a weak solution $u$ under the assumption that $||u||_{L^{r,\infty}([0,T];L^{s,\infty}(\mathbb{R}^3))}$ is sufficiently small for some $(r,s)$ with $\frac{2}{r}+\frac{3}{s}=1$ and $3\leq s<\infty$. The limiting case of the regularity criteria derived by Kim and Kozono was proved by He and Wang \cite{He} i.e. any weak solution $u$ to the Navier-Stokes equations is regular under the assumption that $||u||_{L^{2,\infty}([0,T];L^\infty(\mathbb{R}^3))}$ is sufficiently small. This results were improved by Wang and Zhang \cite{Wang} which showed that $||u_3||_{L^{r,\infty}([0,T];L^{s,\infty}(\mathbb{R}^3))}\leq M$ and $||u_h||_{L^{r,\infty}([0,T];L^{s,\infty}(\mathbb{R}^3))}\leq c_M$ with $\frac{2}{r}+\frac{3}{s}=1$ and $3<s\leq\infty$ imply the regularity of the suitable weak solution $u$ to Navier-Stokes equations, where $u_h=(u_1,u_2)$, $u =(u_h, u_3)$ and $c_M$ is a small constant depending on $M$. Collecting the results of \cite{Kozono,He,Wang}, one should notice that the regularity in the endpoint case $u\in L^{2,\infty}BMO$ is left open. It is worth pointing out that although the spaces $L^{2,\infty}L^\infty$ and $L^{2,\infty}BMO$ are on the same scale under the scaling \eqref{scaling}, but we can construct a divergence free field $u(t,x)$ which satisfies $\lim_{t\to T}\sqrt{T-t}||u(t)||_{BMO}<\infty$ and $\lim_{t\to T}\sqrt{T-t}||u(t)||_{L^\infty}=\infty$ (see Proposition \ref{prop}). It is surprising to some extent because the conditions $\lim_{t\to T}\sqrt{T-t}||u(t)||_{BMO}<\infty$ and $\lim_{t\to T}\sqrt{T-t}||u(t)||_{L^\infty}=\infty$ implies that some Type II blow-up are admissible in the invariant space $L^{2,\infty}BMO$.
Inspiring by the above observation, it is of interest to investigate the blow-up rate for $||u(t)||_{BMO}$ as the solution $u$ approaches a finite blow-up time $T$.  This is one of our goals in this
paper

On the other hand, it is well-known that the Leray-Hopf weak solutions are weak continuous in $L^2$ space, but the strong continuity of $u$ in $L^2$ space is still an open problem in the mathematic theory of Navier-Stokes equations. A sufficient condition for strong continuity of $u$ in $L^2$ space is the energy equality,  as should be expected from the physical point of view. The question of energy equality has of course also been extensively studied for Leray-Hopf solutions of the 3-dimensional Navier-Stokes equations. Lions \cite{Lions} and Ladyzhenskaya \cite{Ladyzhenskaya1} proved independently that such solutions satisfy the (global) energy equality under the additional assumption $u\in L^4L^4$. Shinbrot in \cite{Shinbrot} proved the energy equality under the extrapolated version of the Lions-Ladyzh\u{e}nskaya condition, namely $\frac{2}{r}+\frac{2}{s}\leq 1$ for $s\geq 4$. The endpoint case $u\in L^2L^\infty$ was generalized to $u\in L^2(BMO)$ by Kozono and Taniuchi \cite{Kozono}. Kukavica \cite{Kukavica} proved sufficiency of the weaker but dimensionally equivalent criterion $P\in L^2L^2$. Cheskidov, Friedlander and Shvydkoy \cite{Cheskidov} proved energy equality for $u\in L^3D(A^\frac{5}{12})$ on a bounded domain; an extension to exterior domains was proved in \cite{Farwig} by Farwig and Taniuchi. (Here A denotes the Stokes
operator.) Seregin and \u{S}ver\'{a}k \cite{Seregin1} proved energy equality (regularity, in fact) for suitable weak
solutions whose associated pressure is bounded from below in some sense; this paper makes use of the low-dimensionality of the singular set for suitable weak solutions that is guaranteed by the celebrated
Caffarelli-Kohn-Nirenberg Theorem \cite{Caffarelli}. Leslie and Shvydkoy \cite{Leslie1}; Shvydkoy \cite{Shvydkoy}  proved energy equality under assumptions on the size and (or structure of the singularity set)
in addition to the integrability of the solution. Noticing the gap between the energy conservation criteria $\frac{1}{q}+\frac{1}{p}=\frac{1}{2}$ and the regularity criteria $\frac{2}{q}+\frac{3}{p}=1$, it is natural to consider the energy conservation at blow-up time. Recently, Leslie and Shvydkoy \cite{Leslie} proved that any solution to the 3-dimensional Navier-Stokes Equations in $\mathbb{R}^3$ which is Type I (i.e. $||u(t)||_{L^\infty}\leq\frac{C}{\sqrt{T-t}}$) in time must satisfy the energy equality at the first blowup time $T$. This result was improved by Cheskidov and Luo \cite{Luo} by showing $||u(t)||_{L^{2,\infty}(B^{0}_{\infty,\infty})}<\infty$ implies energy equality. Based the example constructed in Proposition \ref{prop}, we thus consider the energy conservation at some potential Type II blow-up time $T$ satisfying $\limsup_{t\to T^-}\sqrt{T-t}||u(t)||_{BMO}<\infty$ and $\limsup_{t\to T^-}\sqrt{T-t}||u(t)||_{L^\infty}=\infty$.

The tool that we use to address the question of energy conservation is the energy measure which was first introduced in \cite{Shvydkoy1}. The potential failure of energy equality for a solution $u$ of \eqref{NS} can be quantified using a so-called energy measure $\mathcal{E}$, which was defined to be the weak-$\ast$ limit of the measures $|u(t)|^2dx$ as $t$ approaches the first possible blowup time. Precisely, assume $[0,T)$ is the maximal existence interval of a smooth solution to $\eqref{NS}$. It is clear that $|u(t)|^2dx$ is a bounded sequence of Radon measures, so that when $t_k\rightarrow T^-$ there exists a subsequence $|u(t_k)|^2dx$ which converges weak-$\ast$ to some Radon measure $\mathcal{E}$ which is called the energy measure at time $T$.

We introduce the following two quantities, the lower local dimension $d(x,\mathcal{E})$ of $\mathcal{E}$ at $x\in\Omega$, and the concentration dimension $D$ of $\mathcal{E}$ in $\Omega$, defined respectively by
\begin{align}
&d(x,\mathcal{E})=\liminf_{r\rightarrow 0^+}\frac{\ln\mathcal{E}(B_r(x))}{\ln r},\\
&D=\inf\{dim_{\mathcal{H}}(S):S\subset\Omega~compact,~and~ \mathcal{E}(S)>0\},
\end{align}with the convention that $D =3$ if the collection over which the infimum is taken is
empty. The local dimension is
a standard geometric measure theoretic quantity, see \cite{Mattila}, while the concentration dimension which  assigns  a numerical value to the concentration of the energy measure, namely the smallest Hausdorff dimension of a set of positive $\mathcal{E}$-measure, was first introduced in \cite{Shvydkoy1}, together with the energy measure itself.

It is worth pointing out that although the definitions of $d(x,\mathcal{E})$ and $D$ are local, the results in \cite{Leslie} require the solution $u$ must be defined on whole space $\mathbb{R}^3$. Hence, the methods which are dependent on a expanding iterative arguments developed in \cite{Leslie} can not deal with the energy measure on the domains with boundary. The methods developed in \cite{Luo} have similar limitations. In order to consider energy measure and the energy conservation in arbitrary domain $\Omega\subseteq\mathbb{R}^3$, we establish a new iterative method to deal with the energy measure and the local energy equality. Our methods  are applicable to other systems for divergence free fields, in particular, to the three-dimensional MHD equations \cite{Tan-Wu}.
\subsection{Definitions and Notations}
\par
~~~~Before presenting the main results, we give some notations and definitions. We first recall the definitions of weak and suitable weak solutions of \eqref{NS}.
\begin{definition}
Let $T>0$. The function $u$ is called a Leray-Hopf weak solution of \eqref{NS} in
$[0,T]\times\Omega$ if\\
1.$u\in L^\infty(0,T; L^2(\Omega))\cap L^2(0,T; H^1(\Omega)$;\\
2.There exists a distribution $P$ such that $(u,P)$ satisfies \eqref{NS} in the sense of distributions;\\
3.$u$ satisfies the energy inequality: for a.e. $t_0\in [0,t]$ including $t_0=0$,
\begin{align}\label{energy inequality}
\int_{\Omega}|u(t,x)|^2dx+2\int_0^t\int_{\Omega}|\nabla u|^2dxdt\leq\int_{\Omega}|u(t_0,x)|^2dx.
\end{align}
\end{definition}

Furthermore, the pair $(u,P)$ is called a suitable weak solution to \eqref{NS} if $u\in L^3([0,T];L^3(\Omega)),P\in L^\frac{3}{2}([0,T]\times\Omega)$ and the following
local energy inequality holds
\begin{align}\label{le}
\int_{\Omega}|u(t,x)|^2\phi dx+2\int_0^t\int_{\Omega}|\nabla u|^2\phi dxdt\leq \int_0^t\int_{\Omega}|u|^2(\partial_t\phi+\Delta\phi)+(|u|^2+2P)u\cdot\nabla\phi dxdt
\end{align} for every nonnegative $\phi\in C^\infty_0((0,T]\times\Omega)$.

 We say a point $z_0=(t_0,x_0)\in[0,T]\times\Omega$ is a regular point of solution $u$ to \eqref{NS} if there exists a non-empty neighborhood $\mathcal{O}_{z_0}\subset[0,T]\times\Omega$ of $z_0$ such that $u\in L^\infty(\mathcal{O}_{z_0})$. The complement of the set of regular points will be called the singular set.

Let $(u,P)$ be a solution of \eqref{NS}. Introduce the scaling
\begin{align}\label{scaling}
u_\lambda(t,x)=\lambda u(\lambda^2t,\lambda x);~~P_\lambda(t,x)=\lambda^2P(\lambda^2t,\lambda x),
\end{align}for arbitrary $\lambda>0$. Then the family $(u,P)$ is also a solution of \eqref{NS}.
Setting
\begin{align*}
B_r(x_0)=\{x\in\mathbb{R}^3:|x-x_0|<r\},B_r=B_r(0),~B=B_1,\\
Q_r(z_0)=B_r(x_0)\times(t_0-r^2,t_0),~Q_r=Q_r(0),~Q=Q_1.
\end{align*}
 We introduce
the following invariant quantities, which are invariant under the natural scaling \eqref{scaling}:
\begin{align*}
A(u,r,z)=\sup_{t-r^2\leq s\leq t}\frac{1}{r}\int_{B_r(x)\times \{s\}}|u|^2dx;~ B(u,r,z)=\frac{1}{r}\int\int_{Q_r(z)}|\nabla u|^2dxdt,\\
C(u,r,z)=\frac{1}{r^2}\int\int_{Q_r(z)}|v|^3dxdt;~ D(P,r,z)=\frac{1}{r^2}\int\int_{Q_r(z)}|P|^{\frac{3}{2}}dxdt.
\end{align*}
For simplicity, we introduce the notations
\begin{align*}
A(u,r)=A(u,r,0);~B(u,r)=&B(u,r,0);~C(u,r)=C(u,r,0);~D(P,r)=D(P,r,0).
\end{align*}

Let $\Omega\subset\mathbb{R}^n$ and $\Omega_T=[0,T]\times\Omega$. we use $L^qL^p(\Omega_T)$ to denote the space of measurable functions with the following norm
\begin{align*}
||f||_{L^qL^p(\Omega_T)}=\left\{\begin{array}{ll}
(\int_0^T(\int_\Omega|f(t,x)|^pdx)^{\frac{q}{p}}dt)^{\frac{1}{q}},~1\leq q<\infty,\\
ess\sup_{t\in[0,T]}||f(t,\cdot)||_{L^p(\Omega)},~q=\infty.
\end{array}\right.
\end{align*}

The Lorentz space $L^{r,s}(\Omega)$ is the space of measurable functions with the following norm:
\begin{align*}
||f||_{L^{r,s}(\Omega)}=\left\{\begin{array}{ll}
(\int_0^\infty\sigma^{s-1}|\{x\in\Omega:|f(x)|>\sigma\}|^{\frac{s}{r}}d\sigma)^\frac{1}{s},~1\leq s<\infty,\\
\sup_{\sigma>0}\sigma|\{x\in\Omega:|f(x)|>\sigma\}|^\frac{1}{r},~s=\infty.
\end{array}\right.
\end{align*}

We say that a local integrable function $f$ is in $BMO(\mathbb{R}^3)$ if it satisfies
\begin{align*}
\sup_{R>0,x_0\in\mathbb{R}^3}\frac{1}{|B_R(x_0)|}\int_{B_R(x_0)}
|f(x)-[f]_{B_R(x_0)}|dx<\infty.
\end{align*} A local integrable function $f$ is in $BMO_{loc}(\mathbb{R}^3)$ if $f\in BMO(\Omega')$ for any $\Omega'\subset\subset\mathbb{R}^3$.
Moreover, $f(t,x)$ is in $L^{2,\infty}([0,T];BMO)$ if $||f(t,\cdot)||_{BMO}\in L^{2,\infty}([0,T])$.

Throughout this paper, $C$ is a positive constant and $C(A1,A2,...)$ is a positive
constant depending on $A1,A2,\cdots$.
\subsection{Main results}
Before the statements of our main results, we show some examples to explain that some potential type II singularity are admissible in the space $L^{2,\infty}([0,T];BMO(\mathbb{R}^3))$.
\begin{proposition}\label{prop}
Let $u(x,t)=\mathbb{P}\{\frac{1}{\sqrt{T-t}}|\frac{x}{\sqrt{T-t}}|^{\sqrt{T-t}}(-\ln|\frac{x}{\sqrt{T-t}}|)^s\phi(\frac{x}{\sqrt{T-t}})\}$, $0<s<\frac{2}{3}$. Here $\mathbb{P}$ is the Leray projection and $\phi$ is a radial cut-off function which equals 1 in $B_{\frac{1}{2}}$ and vanishes outside of $B_{\frac{3}{4}}$. Then, it holds
\begin{align}
||u(t)||^2_{L^2}\sim\sqrt{T-t},&~||\nabla u(t)||^2_{L^2}\sim\frac{1}{\sqrt{T-t}},\notag\\
||u(t)||_{L^\infty}\geq\frac{1}{(T-t)^{1+\frac{s}{2}}},&~||u(t)||_{BMO}\leq\frac{1}{\sqrt{T-t}}.\notag
\end{align}  In particular, it is holding that
\begin{align*}
\lim_{t\to T}\sqrt{T-t}||u(t)||_{(BMO(\mathbb{R}^3))}<\infty~and~\lim_{t\to T}\sqrt{T-t}||u(t)||_{L^\infty}=\infty.
\end{align*}
\end{proposition}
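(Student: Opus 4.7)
The plan is to verify the proposition by direct calculation in self-similar coordinates. First, I would introduce $\epsilon=\sqrt{T-t}$, $y=x/\epsilon$, and the scalar profile $G_\epsilon(y)=|y|^\epsilon(-\ln|y|)^s\phi(y)$, treated as the magnitude of a vector field $G_\epsilon e$ against a fixed direction $e\in\mathbb{R}^3$ so that $\mathbb{P}$ is well defined. Using that the Leray projection is a homogeneous Fourier multiplier of degree zero and hence commutes with dilations, one writes $u(x,t)=\epsilon^{-1}(\mathbb{P}(G_\epsilon e))(x/\epsilon)$, which reduces every norm in the proposition to a norm of $\mathbb{P}(G_\epsilon e)$ in $y$-space: explicitly, $\|u(t)\|_{L^2}^2=\epsilon\|\mathbb{P}(G_\epsilon e)\|_{L^2}^2$, $\|\nabla u(t)\|_{L^2}^2=\epsilon^{-1}\|\nabla\mathbb{P}(G_\epsilon e)\|_{L^2}^2$, and $\|u(t)\|_{L^\infty}$, $\|u(t)\|_{BMO}$ equal $\epsilon^{-1}$ times the corresponding norms of $\mathbb{P}(G_\epsilon e)$ (the BMO norm being scale invariant).

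Next I would handle the three routine bounds. For $L^2$ and $\dot H^1$, $\mathbb{P}$ is bounded on both, and since $G_\epsilon$ is radial, Plancherel gives the identity $\|\mathbb{P}(G_\epsilon e)\|_{L^2}^2=\tfrac{2}{3}\|G_\epsilon\|_{L^2}^2$ (and its gradient analogue), supplying the two-sided comparability needed for the $\sim$ statements. Spherical integration with the substitution $u=-\ln r$ then reduces $\|G_\epsilon\|_{L^2}^2$ and $\|\nabla G_\epsilon\|_{L^2}^2$ to incomplete Gamma-type integrals that are strictly positive and uniformly bounded as $\epsilon\to 0^+$. For BMO, the Riesz transforms are bounded on $BMO$, so $\|\mathbb{P}(G_\epsilon e)\|_{BMO}\lesssim\|G_\epsilon\|_{BMO}$; the classical fact $\log|y|\in BMO$ extends to $|\log|y||^s\in BMO_{\mathrm{loc}}$ for $0<s\leq 1$ by a direct mean-oscillation estimate, and together with the uniform convergence $|y|^\epsilon\to 1$ in BMO as $\epsilon\to 0$ and multiplication by the smooth cutoff $\phi$ this yields $\sup_{\epsilon>0}\|G_\epsilon\|_{BMO}<\infty$. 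These three steps deliver $\|u(t)\|_{L^2}^2\sim\sqrt{T-t}$, $\|\nabla u(t)\|_{L^2}^2\sim 1/\sqrt{T-t}$, and $\|u(t)\|_{BMO}\lesssim 1/\sqrt{T-t}$.

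The hard part will be the $L^\infty$ lower bound, precisely because $\mathbb{P}$ fails to be bounded on $L^\infty$ --- this is the very contrast with BMO that the proposition is designed to exhibit. A one-variable optimization of $r\mapsto r^\epsilon(-\ln r)^s$ on $(0,3/4]$ locates the maximum at $r_\ast=e^{-s/\epsilon}$ with value $(s/\epsilon)^s e^{-s}$, hence $\|G_\epsilon\|_{L^\infty}\sim\epsilon^{-s}$. To transfer this lower bound to $\mathbb{P}(G_\epsilon e)$ I plan to write the corrector $\nabla\Delta^{-1}\nabla\!\cdot(G_\epsilon e)=\partial_i\partial_j(-\Delta)^{-1}(G_\epsilon e_j)$ as a convolution against the Calder\'on--Zygmund kernel of the Riesz transforms and analyse it pointwise: at the origin the principal value vanishes by the radial symmetry of $G_\epsilon$, while at carefully chosen points away from the origin the singular convolution produces contributions controlled by integrals such as $\int_0^{3/4}r^{\epsilon-1}(-\ln r)^s\,dr\sim\Gamma(s+1)\epsilon^{-s-1}$ (via the same Gamma substitution), large enough to force $\|\mathbb{P}(G_\epsilon e)\|_{L^\infty}$ to blow up at the claimed rate, and in particular $\sqrt{T-t}\|u(t)\|_{L^\infty}\to\infty$. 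The smallness $s<2/3$ keeps each such integral convergent; organising this pointwise Riesz analysis cleanly is the principal obstacle, whereas the remaining estimates are routine.
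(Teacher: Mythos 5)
Your scaling reduction and the $L^2$, $\dot H^1$ computations are consistent with the paper (which simply labels them ``direct computation''; your Plancherel argument with the radial factor $\frac{2}{3}$ is a fine way to make them precise). Your BMO step, however, departs from the paper and has a soft spot. The paper proves the uniform bound by showing $\|\sqrt{T-t}\,\nabla v\|_{L^3(\mathbb{R}^3)}\leq C$ via three Gamma-function integrals --- this is exactly where the hypothesis $s<\frac{2}{3}$ is used, through the convergence of $\int_0^{3/4}r^{-1}(-\ln r)^{3s-3}dr$ --- and then applies boundedness of $\mathbb{P}$ on $L^3$ and the embedding $\dot W^{1,3}(\mathbb{R}^3)\hookrightarrow BMO(\mathbb{R}^3)$. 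Your alternative (Riesz transforms on $BMO$ plus $\sup_{\epsilon}\|G_\epsilon\|_{BMO}<\infty$) is plausible, but the justification ``uniform convergence $|y|^\epsilon\to1$ in BMO, and multiplication'' is not legitimate: $|y|^\epsilon$ does not converge uniformly near the origin, and $BMO$ is not stable under multiplication by bounded factors ($L^\infty\cdot BMO\not\subset BMO$). You would need a direct oscillation estimate on the full radial profile $r^\epsilon(-\ln r)^s\phi(r)$, uniform in $\epsilon$ --- doable, but it is the actual content of that step. Note also that $s<\frac{2}{3}$ plays no role where you invoke it: $\int_0^{3/4}r^{\epsilon-1}(-\ln r)^s\,dr$ converges for every $s>0$; in the paper the restriction lives entirely in the $L^3$-gradient/BMO step.

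The genuine gap is the $L^\infty$ lower bound, which you explicitly leave unexecuted, and your plan there is incompatible with the paper's actual argument. The paper obtains the rate by evaluating $\mathbb{R}_i\mathbb{R}_jv_j$ \emph{at the origin}: writing the kernel as $y_iy_j/|y|^5$, the angular integral $\int_{S^2}\omega_i\omega_j\,d\omega=\frac{4\pi}{3}\delta_{ij}$ does not vanish, and $\int_0^1r^{\sqrt{T-t}-1}(-\ln r)^s\,dr\sim\Gamma(1+s)(T-t)^{-\frac{1+s}{2}}$ then yields the claimed $(T-t)^{-1-\frac{s}{2}}$. Your remark that the principal value at the origin vanishes by radial symmetry is the opposite assertion --- it is what the genuine traceless Calder\'on--Zygmund kernel $c\bigl(3y_iy_j-\delta_{ij}|y|^2\bigr)/|y|^5$, together with the local term $\frac{1}{3}\delta_{ij}\delta_0$, gives for a radial profile times a constant vector --- so you cannot reproduce the paper's computation, and you supply no replacement: no choice of points, no computed bound. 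Worse, there is a concrete reason to doubt that ``carefully chosen points'' can deliver the claimed rate: for $v=V(|x|)e$ the potential $q=\Delta^{-1}\operatorname{div}v$ is explicit, $q=Q(r)\frac{x\cdot e}{r}$ with $Q(r)=r^{-2}\int_0^rs^2V(s)\,ds$, whence $\|\nabla q\|_{L^\infty}\leq C\|V\|_{L^\infty}$ and $\|\mathbb{P}v\|_{L^\infty}\leq C\|v\|_{L^\infty}\sim(T-t)^{-\frac{1+s}{2}}$ --- a full factor $\sqrt{T-t}$ short of the stated bound $(T-t)^{-1-\frac{s}{2}}$. The same explicit formula does give $\|\mathbb{P}v\|_{L^\infty}\geq c\,\|V\|_{L^\infty}$ (the radial component $\frac{2Q}{r}\cos\theta$ and the tangential component $(V-\frac{Q}{r})\sin\theta$ cannot both be small), so your route can salvage the qualitative conclusion $\sqrt{T-t}\,\|u(t)\|_{L^\infty}\to\infty$, but not the displayed inequality $\|u(t)\|_{L^\infty}\geq(T-t)^{-1-\frac{s}{2}}$. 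As written, the proposal neither follows the paper at its decisive step nor closes that step on its own.
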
 Noticing that $||u(t)||_{BMO}\leq\frac{C}{\sqrt{T-t}}$ implies $u\in L^{2,\infty}(BMO)$, we choose naturally the space $L^{2,\infty}(BMO)$ as the workspace.
We now state our main results.
\begin{theorem}\label{main1}
Let $(u,P)$ be a suitable weak solution to \eqref{NS} on $[0,T]\times\Omega$ and $||u||_{L^{2,\infty}([0,T];BMO(\Omega))}=M<\infty$. Then it is holding that
\begin{align}
&\sup_{t-r^2\leq s<t}\frac{1}{r}\int_{B_r(x)}|u|^2dx+\frac{1}{r}\int\int_{Q_r(z)}|\nabla u|^2dxdt
\\&+\frac{1}{r^2}\int\int_{Q_r(z)}|u|^3dxdt+\frac{1}{r^2}\int\int_{Q_r(z)}|P|^\frac{3}{2}dxdt\notag\\\leq &
C(\rho,M),\notag
\end{align}where $Q_{2r}(z)\subset Q_\rho(z)\subset (0,T]\times\Omega$. In particular, fix $z\in(0,T]\times\Omega$, there exists a constant $r_0$ depending on $d_z=dist(z,\partial([0,T]\times\Omega))$ such that if $r<r_0$ then it is holding that
\begin{align}
&\sup_{t-r^2\leq s<t}\frac{1}{r}\int_{B_r(x)}|u|^2dx+\frac{1}{r}\int\int_{Q_r(z)}|\nabla u|^2dxdt
\\&+\frac{1}{r^2}\int\int_{Q_r(z)}|u|^3dxdt+\frac{1}{r^2}\int\int_{Q_r(z)}|P|^\frac{3}{2}dxdt\notag\\\leq &
C(M)\notag.
\end{align}
Moreover, we have
\begin{align}
d(x,\mathcal{E})\geq1;~~D\geq 1.
\end{align}
\end{theorem}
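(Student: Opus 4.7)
The plan is to run a Caffarelli--Kohn--Nirenberg-type iteration on the invariant quantities $A + B + C + D$ at small scales, and then read off the dimension bounds by passing to the weak-$\ast$ limit of $|u(t)|^2\,dx$ as $t \to T^-$. The target is a self-improving inequality of the form $F(r) \leq \theta F(2r) + C(M)$ with $\theta < 1$ for $F(r) = A(u,r,z) + B(u,r,z) + C(u,r,z) + D(P,r,z)$, from which iteration on a geometric sequence $r_k = 2^{-k}\rho$ gives the claimed uniform bound.

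First I would plug into the local energy inequality \eqref{le} a smooth CKN cutoff $\phi$ adapted to $Q_r(z) \subset Q_{2r}(z) \subset Q_\rho(z)$ with $|\nabla\phi| \lesssim r^{-1}$ and $|\partial_t\phi|+|\Delta\phi| \lesssim r^{-2}$. Absorbing the linear terms into $A(u,2r,z)$ and using Young's inequality on the cubic and pressure terms produces
\[
A(u,r,z) + B(u,r,z) \lesssim A(u,2r,z) + C(u,2r,z) + D(P,2r,z)^{2/3} C(u,2r,z)^{1/3}.
\]
The pressure quantity $D(P,r,z)$ is decoupled via the harmonic splitting of $P$ coming from $-\Delta P = \partial_i\partial_j(u_iu_j)$ together with Calder\'on--Zygmund, yielding a bound of the form $D(P,r,z) \lesssim (r/\rho)\, D(P,\rho,z) + (\rho/r)^\alpha C(u,\rho,z)$.

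The crux is controlling $C(u,r,z)$ in the critical regime $L^{2,\infty}(BMO)$. John--Nirenberg applied on each time slice gives
\[
\int_{B_r(x)} |u(s)|^3\,dx \lesssim r^3 \|u(s)\|_{BMO}^3 + r^{-3/2}\,\|u(s)\|_{L^2(B_r(x))}^3,
\]
hence $C(u,r,z) \lesssim r\int_{t-r^2}^{t}\|u(s)\|_{BMO}^3\,ds + A(u,r,z)^{3/2}$. The \emph{main obstacle} is that $L^{2,\infty}_t$ does not embed into $L^3_t$, so the BMO time integral is not bounded a priori by $M$. I would handle it by a good set/bad set decomposition: for a threshold $K > 0$, let $G_K = \{s \in (t-r^2,t) : \|u(s)\|_{BMO} \leq K\}$, whose complement has Lebesgue measure at most $M^2/K^2$ by the $L^{2,\infty}$ norm. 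On $G_K$ the integrand is dominated by $K^3$; on the bad set I would drop the BMO bound and replace it by the Ladyzhenskaya--Sobolev estimate $\|u(s)\|_{L^3(B_r)}^3 \lesssim \|u(s)\|_{L^2(B_{2r})}^{3/2}\|\nabla u(s)\|_{L^2(B_{2r})}^{3/2} + r^{-3/2}\|u(s)\|_{L^2(B_{2r})}^3$ that follows from Sobolev embedding with a cutoff, together with the $L^2_t H^1_x$-control supplied by the energy inequality. Optimizing $K$ as a power of $r$ yields
\[
C(u,r,z) \leq \theta_r \bigl(A(u,2r,z) + B(u,2r,z)\bigr) + C(M),
\]
with $\theta_r$ as small as desired at small enough scales.

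Substituting the $C$- and $D$-estimates into the local-energy inequality closes the bootstrap: iteration on $r_k = 2^{-k}\rho$ below a scale $r_0$ depending on $d_z$ and $M$ gives the uniform bound $F(r) \leq C(\rho,M)$, which sharpens to $C(M)$ once $r \leq r_0$. For the dimension bounds, the uniform $A$-estimate reads $\int_{B_r(x)} |u(s)|^2\,dx \leq C(M)\, r$ for all $s$ close to $T$; passing to the weak-$\ast$ limit $s \to T^-$ yields $\mathcal{E}(\overline{B_r(x)}) \leq C(M)\, r$, whence $d(x,\mathcal{E}) \geq 1$ at every $x$ in the support of $\mathcal{E}$. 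A standard Vitali covering of any compact $S \subset \Omega$ with $\mathcal{E}(S) > 0$ by balls whose radii tend to zero, combined with $\mathcal{E}(\overline{B_r}) \lesssim r$, then forces $\mathcal{H}^1(S) > 0$ and hence $D \geq 1$.
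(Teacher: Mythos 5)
Your overall architecture (a CKN-type iteration on the invariant quantities, the harmonic splitting of the pressure, and the weak-$\ast$ limit plus covering argument for the dimension bounds) matches the paper's, and the pressure step and the final measure-theoretic step are essentially the paper's arguments. But there is a genuine gap at exactly the step you flag as the crux. Applying John--Nirenberg slicewise at the full cubic power of the $BMO$ norm and then splitting time into $G_K$ and its complement cannot yield the claimed inequality $C(u,r,z)\le\theta_r\bigl(A(u,2r,z)+B(u,2r,z)\bigr)+C(M)$. On the bad set, the Ladyzhenskaya bound $\|u\|_{L^3}^3\lesssim\|u\|_{L^2}^{3/2}\|\nabla u\|_{L^2}^{3/2}+\cdots$ carries total power $3$ in $u$, so after H\"older in time one gets at best
\begin{align*}
\frac{1}{r^2}\int_{E_K}\int_{B_r}|u|^3\,dx\,ds\;\lesssim\; r^{-1/2}M^{1/2}K^{-1/2}\,A^{3/4}(u,2r,z)\,B^{3/4}(u,2r,z)+\cdots,
\end{align*}
which has total degree $3/2$ in $A+B$, while your slicewise mean term $r^{-3/2}\|u(s)\|_{L^2(B_r)}^3$ integrates to a naked $A^{3/2}(u,r,z)$ with an absolute constant. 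Neither term can be absorbed into $\theta(A+B)+C(M)$: Young's inequality cannot push a power $3/2$ below power $1$ plus a constant. Tuning $K$ does not help either. Taking $K=\lambda r^{-1}$ makes the good-set contribution a constant $\lambda^3$ and the bad-set coefficient $M^{1/2}\lambda^{-1/2}$, so your recursion has the shape $F(r)\le\theta F(2r)^{3/2}+C\lambda^3$ with $\theta=CM^{1/2}\lambda^{-1/2}$; to contract starting from the initial value $F(\rho)$, which is of size $\rho^{-1}$ times the energy and pressure norms and may be arbitrarily large, you would need simultaneously $\lambda\gtrsim MF(\rho)$ and $\lambda^3\lesssim F(\rho)$, which is impossible for large $F(\rho)$. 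A superlinear self-improving inequality with arbitrary $M$ does not close, and this is precisely the analytic difficulty of the endpoint $L^{2,\infty}(BMO)$, not a presentational detail.

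The paper's Lemma 2.1 avoids both defects with a single interpolation. Splitting $u=(u-u_\rho)+u_\rho$ at the \emph{larger} scale, the mean term is bounded by $C(r/\rho)^3\int_{B_\rho}|u|^3$, i.e.\ by $C(r/\rho)\,C(u,\rho,z)$ rather than $A^{3/2}$; and the oscillation is interpolated between $L^2$ and $L^6$, with John--Nirenberg applied only to the $L^6$ factor, so that $\int_{B_r}|u-u_\rho|^3\le\rho^{3/2}A^{3/4}(u,\rho,z)\|u(t)\|_{BMO}^{3/2}$. The $BMO$ norm thus enters at power $3/2<2$, and the layer-cake computation with the optimized threshold $R=\sqrt{3}\,r^{-1}M$ gives $r^{-1/2}\int_{t-r^2}^{t}\|u\|_{BMO}^{3/2}\,ds\le 2\cdot 3^{3/4}M^{3/2}$ --- exactly the exponent at which $L^{2,\infty}$ in time suffices --- while $A$ appears at the sublinear power $3/4$. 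Everything on the right is then linear or sublinear, and the combined quantity $G=A+B+C^{7/6}+D^{8/7}$ (with these carefully chosen exponents) satisfies $G(r)\le C(r/\rho)^{8/7}G(\rho)+C(1+M^{14})(\rho/r)^{168}$, which iterates. If you replace your good/bad-set decomposition by this interpolation, the rest of your outline --- the pressure estimate, the iteration, and your dimension argument, which agrees with the paper's --- goes through.
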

As an application of Theorem \ref{main1}, we first show that
\begin{theorem}\label{main2}
Assume $\Omega\subseteq\mathbb{R}^3$. Let $(u, P)$ be a solution to the  Navier-Stokes equations on $[0,T)\times\Omega$ which is regular on the time interval $[0,T)$. If $u$ experiences singularity in time at $t=T$ and $||u||_ {L^{2,\infty}([0,T];BMO(\Omega))}=M<\infty$ (or $\limsup_{t\rightarrow T^-}\sqrt{T-t}||u(t)||_{BMO(\Omega)}<\infty$), then it is holding
\begin{align*}
&\int_{\Omega}|u(t)|^2\phi dx+2\int_{0}^t\int_{\Omega}|\nabla u|^2\phi dxdt\\
=&\int_{0}^t\int_{\Omega}|u|^2(\partial_t\phi+\Delta\phi)dxdt+\int_{0}^t\int_{\Omega}(|u^2|+2P)u\cdot\nabla\phi
dxdt~for~all~t\in[0,T]\notag
\end{align*}where $\phi\in C^\infty_0((0,T]\times\Omega)$.
\end{theorem}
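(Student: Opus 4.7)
The plan is to use regularity of $u$ on $[0,T)$ to write the classical local energy equality on $[0,T-\eps]$ for every $\eps>0$, and then pass $\eps\to 0^+$. For the space-time integrals this is straightforward: Theorem \ref{main1} bounds $u$ in $L^3_{loc}$, $P$ in $L^{3/2}_{loc}$, and $\nabla u$ in $L^2_{loc}$ on compact subsets of $[0,T]\times\Omega$, and since $\phi\in C_0^\infty((0,T]\times\Omega)$ has compact support, the dominated convergence theorem handles these four integrals. All the difficulty is concentrated in the boundary term $\int_\Omega|u(T-\eps)|^2\phi\,dx$, which must be shown to converge to $\int_\Omega|u(T)|^2\phi\,dx$.

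To treat this term, I extract a subsequence $t_k\to T^-$ along which $|u(t_k)|^2\,dx$ converges weak-$\ast$ to a Radon measure $\mathcal{E}$ on $\Omega$ and $u(t_k)\rightharpoonup u(T)$ weakly in $L^2_{loc}(\Omega)$ (both compactness statements follow from the uniform bound on $A(u,r,z)$ given by Theorem \ref{main1}). Writing $\mathcal{E}=|u(T)|^2\,dx+\mu$ with $\mu\geq 0$ the defect measure, the theorem reduces to showing $\mu\equiv 0$; once this is done, the subsequential limit is uniquely determined and hence the full limit exists. Note that $\mu$ is supported in the spatial singular set $S_T\subset\Omega$ at time $T$, since outside $S_T$ the solution is smooth up to $t=T$ and the convergence is locally uniform.

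Two ingredients kill $\mu$. First, Theorem \ref{main1} applied at $z=(T,x_0)$, then sending $s\to T^-$ inside $A(u,r,z)$, yields the linear density bound $\mathcal{E}(B_r(x_0))\leq C(M)\,r$ for every $x_0\in\Omega$ and every sufficiently small $r$, whence $\mu(B_r(x_0))\leq C(M)r$ on $S_T$. Second, the Caffarelli-Kohn-Nirenberg partial regularity theorem applied to the suitable weak solution $(u,P)$ forces the parabolic $1$-dimensional Hausdorff measure of the full space-time singular set to vanish; projecting a parabolic cover of that set onto the slice $\{t=T\}$ produces a spatial cover of $S_T$ with the same total radius sum, so $\mathcal{H}^1(S_T)=0$. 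A standard Vitali/Besicovitch covering argument then combines the density bound with $\mathcal{H}^1(S_T)=0$ to give $\mu(S_T)\leq C(M)\,\mathcal{H}^1(S_T)=0$, and hence $\mu=0$.

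The main obstacle is this last step: I must be sure that the density bound $\mathcal{E}(B_r)\leq C(M)r$ from Theorem \ref{main1} is genuinely uniform in $x_0$ as $r\to 0^+$ (with the radius threshold depending only on the distance $d_z$ to the parabolic boundary), and that the projection argument correctly transfers the parabolic estimate to a spatial estimate on $S_T$. The other steps -- convergence of the space-time integrals and extraction of the weak-$\ast$ limit -- are routine once these uniform local estimates and the covering argument are in place.
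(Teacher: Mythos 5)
Your proposal is correct and follows essentially the same route as the paper: both reduce the theorem to strong $L^{2}$ convergence of the boundary term $\int_\Omega |u(t)|^2\phi\,dx$ as $t\to T^-$, and both establish it from exactly the two ingredients you identify --- the uniform density bound $\frac{1}{r}\int_{B_r(x)}|u(t)|^2\,dy\le C(M,d)$ from Theorem \ref{main1} (which the paper records uniformly in $x$ over $\mathrm{supp}\,\phi$ for $r\le d/2$, so the uniformity you worry about is fine) together with $\mathcal{H}^1(\Sigma)=0$ from Caffarelli--Kohn--Nirenberg, plus locally uniform convergence off the closed singular set. The only difference is packaging: you phrase the concentration argument through a defect measure $\mu$ killed by a covering estimate, while the paper performs the equivalent direct $\varepsilon$-splitting of the integral into a cover $\cup B^\epsilon_i$ of $\Sigma$ with $\sum r^\epsilon_i<\epsilon$ and its compact complement, needing only countable subadditivity rather than a Vitali/Besicovitch covering theorem.
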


The second application of Theorem \ref{main1} is
\begin{theorem}\label{main3}
 Assume $(u,P)$ be a smooth solution of the Navier-Stokes equations \eqref{NS} on $[0,T)\times\mathbb{R}^3$ and $u\in L^{2,\infty}([0,T);BMO(\mathbb{R}^3))$ (or $\limsup_{t\rightarrow T^-}\sqrt{T-t}||u(t)||_{BMO(\mathbb{R}^3)}<\infty$) then the energy equality must hold on interval $[0,T]$
 \begin{align}
 \int_{\mathbb{R}^3}|u(t,\cdot)|^2dx+2\int_0^t\int_{\mathbb{R}^3}|\nabla u|^2dxdt=\int_{\mathbb{R}^3}|u_0|^2dx~for~t\in[0,T].
 \end{align}
\end{theorem}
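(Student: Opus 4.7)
The strategy is to apply Theorem~\ref{main2} with $\Omega=\mathbb{R}^3$ and the product test function $\phi(s,x)=\chi_\eta(s)\phi_R(x)$, then let $\eta\to 0$ and $R\to\infty$ in order to upgrade the local energy identity to a global one. Since $u$ is smooth on $[0,T)$, the classical energy equality on $[0,t]$ for $t<T$ is obtained by testing the equation against $u$ itself; the only delicate point is the endpoint $t=T$, at which $u(T)$ is understood as the weak-$L^2$ limit of $u(s)$ as $s\to T^-$. Such a limit exists because the classical identity on $[0,t]$ already gives $u\in L^\infty([0,T);L^2(\mathbb{R}^3))\cap L^2([0,T);H^1(\mathbb{R}^3))$.

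Before invoking Theorem~\ref{main2}, I establish two global integrability properties on $\mathbb{R}^3\times[0,T]$. By Gagliardo-Nirenberg and Sobolev embedding, $\|u(s)\|_{L^3}^3\lesssim\|u(s)\|_{L^2}^{3/2}\|\nabla u(s)\|_{L^2}^{3/2}$, and H\"older in time upgrades this to $u\in L^3([0,T];L^3(\mathbb{R}^3))$. For the pressure, $-\Delta P=\partial_i\partial_j(u_i u_j)$ on $\mathbb{R}^3$ together with the Calderon-Zygmund estimate (under the decay normalization $P(\cdot,x)\to 0$ as $|x|\to\infty$) yields $P\in L^{3/2}([0,T];L^{3/2}(\mathbb{R}^3))$.

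Choose $\phi_R(x)=\chi(|x|/R)$ with $\chi\in C_c^\infty(\mathbb{R})$ equal to $1$ on $[0,1]$ and supported in $[0,2]$, and $\chi_\eta\in C_c^\infty((0,T])$ a monotone approximation of $\mathbf{1}_{[0,T]}$ with $\chi_\eta\equiv 0$ on $[0,\eta/2]$ and $\chi_\eta(T)=1$. Applying Theorem~\ref{main2} at the endpoint $t=T$ with $\phi=\chi_\eta\phi_R$ gives
\[\int_{\mathbb{R}^3}|u(T)|^2\phi_R\,dx+2\int_0^T\!\!\int_{\mathbb{R}^3}|\nabla u|^2\chi_\eta\phi_R=\int_0^T\!\!\int_{\mathbb{R}^3}|u|^2(\chi_\eta'\phi_R+\chi_\eta\Delta\phi_R)+\int_0^T\!\!\int_{\mathbb{R}^3}(|u|^2+2P)u\cdot\nabla\phi_R\,\chi_\eta.\]
Sending $\eta\to 0$, the $\chi_\eta'$-term concentrates at $s=0$ and, by continuity of $u$ at the initial time, converges to $\int|u_0|^2\phi_R\,dx$, while the other terms pass to the limit by dominated convergence using the global integrability above. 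Sending $R\to\infty$, the bulk terms converge by monotone convergence; the $|\Delta\phi_R|\lesssim R^{-2}$ contribution is killed by $u\in L^\infty L^2$; and the remaining flux integral is bounded by $R^{-1}\int_0^T\!\!\int_{R\le|x|\le 2R}(|u|^3+|Pu|)\,dx\,ds$, which tends to $0$ by absolute continuity of the Lebesgue integral since $u\in L^3L^3$ and $Pu\in L^1L^1$ on $\mathbb{R}^3\times[0,T]$ (the latter by H\"older from $P\in L^{3/2}L^{3/2}$). This delivers the global energy equality at $t=T$.

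The principal obstacle is the rigorous decay of the pressure-velocity flux in the limit $R\to\infty$. This rests on the global Calderon-Zygmund bound for $P$ on $\mathbb{R}^3$, after which H\"older's inequality and absolute continuity close the estimate. A secondary technical point is the approximation at the initial time, forced by the requirement that test functions in Theorem~\ref{main2} vanish near $s=0$; this is dealt with by the $\chi_\eta$ regularization. Once the equality at $t=T$ is proved, comparison with the smooth identity $\lim_{s\to T^-}\|u(s)\|_{L^2}^2=\|u_0\|_{L^2}^2-2\int_0^T\|\nabla u\|_{L^2}^2$ also upgrades the weak $L^2$ convergence $u(s)\rightharpoonup u(T)$ to strong convergence, confirming the consistency of the extension.
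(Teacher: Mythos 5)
Your proposal is correct and takes essentially the same route as the paper: apply Theorem~\ref{main2} with $\Omega=\mathbb{R}^3$, test with a spatial cutoff at scale $R$, bound the $\Delta\phi_R$ term by $CR^{-2}\sup_t\|u(t)\|_{L^2}^2$ and the flux term using the global $L^3L^3$ and $L^{3/2}L^{3/2}$ bounds, and let $R\to\infty$. Your additional details --- the time mollifier $\chi_\eta$ recovering the $\int_{\mathbb{R}^3}|u_0|^2\phi_R\,dx$ term, the Gagliardo--Nirenberg derivation of $u\in L^3L^3$, and the Calder\'on--Zygmund bound $P\in L^{3/2}L^{3/2}$ --- only make rigorous what the paper leaves implicit (the paper even gets the flux decay more cheaply, from the explicit $R^{-1}$ prefactor alone, without needing absolute continuity over annuli).
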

Thirdly, we show a necessary condition for a solution $u$ of Navier-Stokes equations \eqref{NS} developing a singularity at time $T$
\begin{theorem}\label{main4}
Let $(u,P)$ be a solution to the Navier-Stokes equations \eqref{NS} on $[0,T)\times\mathbb{R}^3$ which is regular on the time interval $[0,T)$. If
$u$ satisfies $||u(t)||_{L^{2,\infty}([0,T];{BMO(\mathbb{R}^3)})}\leq c$ for suitable small constant $c$ then $u$ can be continued to the regular solution beyond $t=T$. Especially, if $u$ satisfies $$\limsup_{t\rightarrow T^-}\sqrt{T-t}||u(t)||_{BMO(\mathbb{R}^3)}\leq c,$$ then $u$ can be continued to the regular solution beyond $t=T$.
\end{theorem}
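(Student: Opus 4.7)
The plan is to deduce Theorem \ref{main4} from Theorem \ref{main1} combined with the Caffarelli--Kohn--Nirenberg $\varepsilon$-regularity criterion. Recall that CKN asserts the existence of a universal constant $\varepsilon_{CKN}>0$ such that any suitable weak solution $(u,P)$ satisfying $C(u,r,z)+D(P,r,z)<\varepsilon_{CKN}$ at some scale $r$ is regular at $z$. Since $u$ is by assumption regular on $[0,T)$, it suffices to rule out singularity formation at $t=T$.

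First I would fix an arbitrary $x_0\in\mathbb{R}^3$ and examine the candidate singular point $z_0=(T,x_0)$. Because $\Omega=\mathbb{R}^3$ has no boundary, the constraint $r<r_0$ in Theorem \ref{main1} places no real restriction, and we obtain
\begin{align*}
C(u,r,z_0)+D(P,r,z_0)\leq C(M)
\end{align*}
for all sufficiently small $r$, with $M=\|u\|_{L^{2,\infty}([0,T];BMO(\mathbb{R}^3))}$. The crucial observation is that, when traced through the iterative argument behind Theorem \ref{main1}, the constant $C(M)$ degenerates to $0$ as $M\to 0$: the nonlinear quantities $C$ and $D$ inherit powers of $\|u\|_{L^{2,\infty}BMO}$ through the John--Nirenberg-type control of local oscillations, so the output takes the form $C(M)\lesssim M^\alpha$ for some $\alpha>0$ once $M$ itself is already small. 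Consequently, choosing $c$ small enough that $C(c)<\varepsilon_{CKN}$, we reach $C(u,r,z_0)+D(P,r,z_0)<\varepsilon_{CKN}$ at some admissible scale.

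CKN then declares $z_0=(T,x_0)$ to be a regular point. Because the estimate is uniform in $x_0$, the corresponding CKN neighborhood of regularity can be taken of a uniform size; this yields $u\in L^\infty([T-\delta,T]\times\mathbb{R}^3)$ for some $\delta>0$, and standard short-time existence theory continues $u$ smoothly past $t=T$. The pointwise $\limsup$ version reduces to this instantly: if $\sqrt{T-t}\|u(t)\|_{BMO}\leq c$ on $[T-\delta',T)$, then by the very definition of the Lorentz quasinorm $\|u\|_{L^{2,\infty}([T-\delta',T];BMO)}\lesssim c$, and the previous argument applied on this shorter interval finishes the job.

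The main obstacle is the quantitative dependence on $M$ inside Theorem \ref{main1}: one must verify that the iteration producing $C(M)$ can be organized so that $C(M)$ actually vanishes with $M$, rather than merely remaining bounded. Concretely, this requires isolating $\|u\|_{L^{2,\infty}BMO}$ as an explicit multiplicative factor in the key oscillation and pressure estimates of Theorem \ref{main1}, using the smallness of $M$ to close the iteration at small scales without loss. Once this bookkeeping is in place, everything downstream is the standard CKN-plus-continuation pattern and causes no further difficulty.
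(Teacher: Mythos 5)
Your overall route (uniform bounds on the scale-invariant quantities, then the Caffarelli--Kohn--Nirenberg $\varepsilon$-regularity criterion of Proposition \ref{p2}, then continuation past $T$) is indeed the paper's route, but the pivotal quantitative step is asserted where it in fact fails. The constant $C(M)$ produced by Theorem \ref{main1} does \emph{not} degenerate to $0$ as $M\to 0$: the iteration behind Lemma \ref{ubi} closes via Young's inequality absorptions, e.g.
\begin{align*}
D^{\frac{8}{7}}(P,r)\leq C\Bigl(\frac{r}{\rho}\Bigr)^{\frac{8}{7}}D^{\frac{8}{7}}(P,\rho)+\Bigl(\frac{r}{\rho}\Bigr)^{\frac{7}{6}}C^{\frac{7}{6}}(u,\rho)+C\Bigl(\frac{\rho}{r}\Bigr)^{168},
\end{align*}
and such steps inject additive terms that are independent of $M$; the final recursion is $G(r)\leq C(\frac{r}{\rho})^{\frac{8}{7}}G(\rho)+(C+M^{14})(\frac{\rho}{r})^{168}$, whose output is bounded below by a universal constant no matter how small $M$ is. So ``tracing the iteration and isolating the BMO norm as a multiplicative factor'' cannot yield $C(M)\lesssim M^{\alpha}$; no bookkeeping inside that proof makes the constant vanish with $M$, and your argument stalls exactly at the point you yourself flag as the ``main obstacle.''

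The paper fills this gap with a genuinely separate step (Lemma \ref{epsilon}): \emph{first} use Lemma \ref{ubi} to get the $O(1)$ bound $A(u,\rho,z)+B(u,\rho,z)+C(u,\rho,z)+D(P,\rho,z)\leq C$ for $\rho\leq r_0$; \emph{then} run a second iteration on $C$ and $D$ alone, in which $A$ is frozen at its $O(1)$ bound and the small norm enters only through the explicitly multiplicative term of Lemma \ref{CB},
\begin{align*}
C(u,r,z)\leq \frac{r}{\rho}C(u,\rho,z)+C\Bigl(\frac{\rho}{r}\Bigr)^{\frac{3}{2}}c^{\frac{3}{2}}A^{\frac{3}{4}}(u,\rho,z)\leq \frac{r}{\rho}C(u,\rho,z)+Cc^{\frac{3}{2}}\Bigl(\frac{\rho}{r}\Bigr)^{\frac{3}{2}},
\end{align*}
combined with the pressure recursion \eqref{D}. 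In this linear-in-$(C,D)$ iteration the inherited term decays geometrically in the scale (like $2^{-k}C(u,r_0,z)$) while the source term is $O(c^{3/2})$; choosing first the number of iteration steps and then $c$ yields $C(u,r,z)+D(P,r,z)\leq\varepsilon_0$ for all $0<r\leq r^*$, with $r^*$ depending only on the distance to the parabolic boundary --- hence uniform in $x_0$ at $t=T$, which is what your continuation step needs. Your remaining ingredients (applying Proposition \ref{p2} at every $z=(T,x_0)$, and reducing the $\limsup$ hypothesis to the $L^{2,\infty}$ bound on a short interval via $\sigma^2\,|\{t:\,c/\sqrt{T-t}>\sigma\}|\leq c^2$) are correct and match the paper; the missing piece is precisely the two-stage smallness mechanism, which must replace, not refine, the constant of Theorem \ref{main1}.
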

\begin{remark}
Theorem \ref{main4} means that as the solution $u$ of \eqref{NS} approaches a finite blow-up time $T$, the norm $||u(t)||_{BMO(\mathbb{R}^3)}$ must blow up at a rate $\frac{c}{\sqrt{T-t}}$.
\end{remark}
Furthermore, we can allow the vertical part of the velocity $u$ to be large.
\begin{theorem}\label{main5}
Let $(u,P)$ be a suitable weak solution to \eqref{NS} on $[-1,0]\times\mathbb{R}^3$. Assume that $u$ satisfies
\begin{align}
||u_3||_{L^{2,\infty}([-1,0];BMO(\mathbb{R}^3))}=M<\infty.
\end{align} Then there exists a suitable small constant $\varepsilon_1$ depending on $M$ such that if $||u_h||_{L^{2,\infty}([-1,0];BMO(\mathbb{R}^3))}\leq\varepsilon_1$ then $u$ is regular in $[-1,0]\times\mathbb{R}^3$.
\end{theorem}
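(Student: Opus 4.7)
The strategy is to run the Caffarelli--Kohn--Nirenberg $\varepsilon$-regularity machinery, using Theorem \ref{main1} to supply a priori bounds on the scale-invariant quantities and using the smallness of $u_h$ to upgrade a merely bounded quantity into a small one. Since $\|u\|_{L^{2,\infty}([-1,0];BMO)}\leq M+\varepsilon_1$, Theorem \ref{main1} gives
\[
A(u,r,z)+B(u,r,z)+C(u,r,z)+D(P,r,z)\leq C(M)
\]
for every $z\in(-1,0]\times\mathbb{R}^3$ and every sufficiently small $r$. The constant $C(M)$ is not small, so Theorem \ref{main4} cannot be applied directly; we need an argument that genuinely exploits the anisotropic structure.

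The key step is to derive an improved decay iteration of the form
\[
C(u,\theta r)+D(P,\theta r)\leq \theta^{\alpha}\bigl[C(u,r)+D(P,r)\bigr]+C(M)\,\varepsilon_1^{\beta}
\]
for some $\alpha,\beta>0$ and a fixed $\theta\in(0,1)$; iterating yields smallness of $C(u,r)+D(P,r)$ at small scales, after which CKN $\varepsilon$-regularity closes the argument at each point. To prove such an inequality I would split the pressure by
\[
-\Delta P=\sum_{i,j\in\{1,2\}}\partial_i\partial_j(u_iu_j)+2\sum_{i\in\{1,2\}}\partial_i\partial_3(u_iu_3)+\partial_3^2(u_3^2),
\]
so that the first two groups carry an explicit factor $u_h$ and, via the boundedness of Riesz transforms on BMO, inherit smallness from $\|u_h\|_{L^{2,\infty}BMO}\leq\varepsilon_1$. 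The pure-vertical contribution $P_{33}\sim R_3R_3(u_3^2)$ is only bounded, so for it I would use the divergence-free identity $\partial_3u_3=-\nabla_h\cdot u_h$ and integrate by parts once in $x_3$, trading the $u_3^2$ structure for a term containing $u_h$, which is again small. The pure-vertical part of the convection term in the local energy inequality is handled by the same device via
\[
u_3\,\partial_3\phi=\partial_3(u_3\phi)+(\nabla_h\cdot u_h)\phi,
\]
which rewrites $\iint|u|^2u_3\partial_3\phi$ as a divergence piece (vanishing after integration in $x_3$) plus a term proportional to $\nabla_h\cdot u_h$. The bounded factors $A,B,C,D$ from Theorem \ref{main1} supply the complementary norms in the resulting H\"older/interpolation estimates, converting the smallness of $u_h$ into the $\varepsilon_1^{\beta}$ term above.

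The main obstacle will be twofold. First, the Riesz-transform/BMO estimates must be localized to arbitrary parabolic cylinders $Q_r(z)$, which requires spatial truncation of $u$ and careful control of the commutator terms produced; one must check that the smallness of $\|u_h\|_{L^{2,\infty}BMO}$ survives the cut-off and is not drowned by the tail $P$ generated outside the cylinder (a standard pressure-tail decomposition using a harmonic corrector should do the job). Second, the pure-vertical pressure piece requires a double integration by parts (once in the representation of $P$ and once in the energy inequality), and one must verify that every boundary and commutator term is either absorbed into the $C(M)$-bounded invariants or genuinely proportional to $\varepsilon_1$. Once both points are executed cleanly, the iterated decay closes, $C(u,r)+D(P,r)$ drops below the CKN threshold at some small scale uniformly in $z$, and CKN $\varepsilon$-regularity applied at every $z\in[-1,0]\times\mathbb{R}^3$ yields the claimed regularity of $u$.
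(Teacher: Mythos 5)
Your plan has a genuine gap at its core step, and it is exactly the point that forces the paper into a different argument. You propose to convert $\|u_h\|_{L^{2,\infty}([-1,0];BMO)}\leq\varepsilon_1$ into a small remainder $C(M)\varepsilon_1^{\beta}$ in a CKN-type decay iteration, by exhibiting an explicit factor of $u_h$ in each dangerous term (via the pressure splitting and the identity $u_3\partial_3\phi=\partial_3(u_3\phi)+(\nabla_h\cdot u_h)\phi$). But smallness of the $BMO$ norm controls only the \emph{oscillation} of $u_h$, not $u_h$ itself: writing $u_h=(u_h)_{B_\rho}+\tilde u_h$, the mean $(u_h)_{B_\rho}(t)$ is only controlled through $A(u,\rho,z)\leq C(M)$, i.e.\ $|(u_h)_{B_\rho}|\lesssim\sqrt{C(M)}/\rho$ in the scale-invariant bookkeeping, which is $O(1)$ after rescaling and in no way small. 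Concretely, after your integration by parts the term $\iint|u|^2(\nabla_h\cdot u_h)\phi$ produces $\iint u_h\cdot\nabla_h(|u|^2)\,\phi$, whose mean part is $b(t)\cdot\iint\nabla_h(|u|^2)\phi=-b(t)\cdot\iint|u|^2\nabla_h\phi$ with $b(t)=(u_h)_{B_\rho}(t)$; this contributes $C(M)$, not $C(M)\varepsilon_1^{\beta}$, so the iteration never drops below the threshold $\varepsilon_0$ of Proposition \ref{p2}. The same problem defeats the pressure step: smallness of $D(P,r)$ for the mixed pieces would require smallness of $\|u_hu_3\|_{L^{3/2}}$, hence of $u_h$ in $L^3$ locally, which again fails because of the mean; and boundedness of Riesz transforms on $BMO$ does not help, since $u\otimes u$ is not controlled in $BMO$ by $\|u\|_{BMO}$ ($BMO$ is not an algebra), and the relevant quantity is an $L^{3/2}$ bound anyway.

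The paper's proof is a blow-up/compactness argument designed precisely to neutralize this non-small mean. Arguing by contradiction, one takes solutions $(u^k,P^k)$ with $\|u^k_h\|_{L^{2,\infty}BMO}\leq 1/k$, singular at the origin, rescales by the radii $r_k$ furnished by Lemma \ref{ubi} so that $A,B,C,D$ are uniformly bounded by $C(M)$, and uses the local energy inequality \eqref{le} plus Aubin--Lions and Proposition \ref{p1} to pass to a limiting suitable weak solution $(v,\Pi)$ with $v^k\to v$ strongly in $L^3$. Fatou gives $\|v_h\|_{L^{2,\infty}BMO}=0$, whence $v_h=b(t)$ --- spatially constant but \emph{not zero}, which is exactly the surviving mean your iteration cannot absorb. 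Incompressibility then gives $\partial_3v_3=0$, the nonlinearity collapses to the linear drift $-(b_1,b_2,0)\cdot\nabla(0,0,v_3)$, and linear Stokes theory yields $|v|\leq C(M)$ on $Q_{1/2}$; combining Proposition \ref{p2}, the pressure decay \eqref{D} and the strong $L^3$ convergence, the persistence of the singularity forces $\varepsilon_0<C(M)r^{1/2}$ (after choosing $\rho=\sqrt r$), a contradiction. In short: the large spatial average of $u_h$ is dealt with qualitatively, by letting it become a time-dependent drift in a linear limit equation, rather than quantitatively; your route would require an additional idea (e.g.\ a Galilean-type change of frame removing $(u_h)_{B_\rho}(t)$ at each scale of the iteration, with uniform control of the accumulated shifts) before the claimed decay inequality could hold.
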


By using Theorem \ref{main3}, we obtain some corollaries.
\begin{corollary}\label{corollary1}
Assume $(u,P)$ be a Leray-Hopf weak solution to the Navier-Stokes equations \eqref{NS} on $[0,T]\times\mathbb{R}^3$. If $u$ experiences a finite number of singularities at $\{t_i\}, 1\leq i\leq N$ in $[0,T]$ and $u$ satisfies $\limsup_{t\rightarrow t_i}\sqrt{t_i-t}||u(t)||_{BMO(\mathbb{R}^3)}<\infty$, then $u$ must satisfy energy equality on $[0,T]$.
\end{corollary}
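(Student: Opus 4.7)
My plan is to chop $[0,T]$ at the finitely many singular times $0<t_1<\cdots<t_N\leq T$ and invoke Theorem \ref{main3} on each smooth sub-interval, then splice the resulting energy equalities across the singular times using Leray--Hopf weak $L^2$ continuity together with the global energy inequality. Setting $t_0=0$ and $t_{N+1}=T$, the solution is smooth on each $(t_{i-1},t_i)$, so $\|u(\cdot)\|_{BMO(\mathbb{R}^3)}$ is locally bounded there; combined with $\limsup_{t\to t_i^-}\sqrt{t_i-t}\|u(t)\|_{BMO}<\infty$ this gives $u\in L^{2,\infty}([t_{i-1}+\varepsilon,t_i);BMO(\mathbb{R}^3))$ for every small $\varepsilon>0$. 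Theorem \ref{main3} therefore applies on $[t_{i-1}+\varepsilon,t_i]$ and yields
\[
\|u(t)\|_{L^2}^2+2\int_{t_{i-1}+\varepsilon}^{t}\|\nabla u\|_{L^2}^2\,ds=\|u(t_{i-1}+\varepsilon)\|_{L^2}^2,\qquad t\in[t_{i-1}+\varepsilon,t_i].
\]

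Next I would let $\varepsilon\to 0^+$. The dissipation integral passes to $\int_{t_{i-1}}^{t}\|\nabla u\|_{L^2}^2\,ds$ by monotone convergence, while weak continuity of the Leray--Hopf solution at $t_{i-1}$ combined with the weak lower semicontinuity of $\|\cdot\|_{L^2}$ only delivers the one-sided bound
\[
\|u(t)\|_{L^2}^2+2\int_{t_{i-1}}^{t}\|\nabla u\|_{L^2}^2\,ds\geq\|u(t_{i-1})\|_{L^2}^2,\qquad t\in(t_{i-1},t_i].
\]
I would then close the argument by induction on $i$ that $\|u(t)\|_{L^2}^2+2\int_0^{t}\|\nabla u\|_{L^2}^2\,ds=\|u_0\|_{L^2}^2$ for every $t\in[0,t_{i+1}]$. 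The base case $i=0$ follows from Theorem \ref{main3} on $[0,t_1]$ together with the standard upgrade from weak to strong $L^2$ right-continuity at $t=0$. For the inductive step, chaining the inductive equality at $t=t_i$ with the above inequality on $[t_i,t_{i+1}]$ delivers the $\geq$ direction on $[0,t_{i+1}]$, and the Leray--Hopf energy inequality from $t_0=0$ supplies the matching $\leq$ direction, so equality must hold throughout.

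The main obstacle I anticipate is precisely the potential energy drop at each singular time $t_i$: Theorem \ref{main3} locks in the energy balance from the left at $t_i$, but just to the right only weak $L^2$ continuity is available a priori, which would permit $\lim_{\varepsilon\to 0^+}\|u(t_i+\varepsilon)\|_{L^2}^2>\|u(t_i)\|_{L^2}^2$. The key observation that rescues the argument is the two-sided squeeze described above: weak lower semicontinuity pushes this right limit up to at least $\|u(t_i)\|_{L^2}^2$, while the global Leray--Hopf energy inequality pushes it down to at most $\|u(t_i)\|_{L^2}^2$, trapping the value exactly and ruling out any jump. Once this squeeze is in hand the iteration through the finitely many $t_i$ is mechanical, and the conclusion $\|u(T)\|_{L^2}^2+2\int_0^T\|\nabla u\|_{L^2}^2=\|u_0\|_{L^2}^2$ follows.
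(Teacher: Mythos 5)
Your proof is correct and follows essentially the same route as the paper's: split $[0,T]$ at the singular times, apply Theorem \ref{main3} on each regular piece, and rule out an energy jump at each $t_i$ by squeezing $\lim_{\epsilon\to 0^+}\|u(t_i+\epsilon)\|^2_{L^2}$ against $\|u(t_i)\|^2_{L^2}$ from below via weak lower semicontinuity and from above via an energy inequality. The only cosmetic difference is that the paper gets the upper bound from the strong energy inequality with starting times $s\to t_i^-$ (using the strong left continuity at $t_i$ furnished by the equality on $[0,t_i]$), whereas you chain the inductive equality up to $t_i$ with the global energy inequality from $t=0$; the two squeezes are interchangeable.
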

\begin{proof}
Without loss of generality, we assume $t_0\in(0,T)$ and $u$ experiences singularity at $t_0$ and $T$. It is clear that $u$ is regular in $[0,t_0)$ and $(t_0,T)$. By Theorem \ref{main3}, we have that the energy equality must hold on $[0,t_0]$ and $[t_0+\epsilon,T]$ for $\epsilon\in (0,\frac{1}{2}(T-t_0))$
\begin{align}\label{eq1}
\int_{\mathbb{R}^3}|u(s)|^2dx+\int_{0}^s\int_{\mathbb{R}^3}|\nabla u|^2dxdt=&\int_{\mathbb{R}^3}|u_0|^2dx~for~ s\in[0,t_0],\\
\int_{\mathbb{R}^3}|u(\tau)|^2dx+\int_{t_0+\epsilon}^\tau\int_{\mathbb{R}^3}|\nabla u|^2dxdt=&\int_{\mathbb{R}^3}|u(t_0+\epsilon)|^2dx~for~ \tau\in[t_0+\epsilon,T].
\end{align}

To show the energy equality on $[0,T]$, it is sufficient to prove
\begin{align}\label{sc}
\lim_{\epsilon\rightarrow0^+}||u(t_0+\epsilon)||^2_{L^2(\mathbb{R}^3)}=||u(t_0)||^2_{L^2(\mathbb{R}^3)}.
\end{align}

On the one hand, by the weak lower semicontinuity of the $L^2$ norm of $u$, we know that
\begin{align}\label{wc}
||u(t_0)||^2_{L^2(\mathbb{R}^3)}\leq \liminf_{\epsilon\rightarrow0^+}||u(t_0+\epsilon)||^2_{L^2(\mathbb{R}^3)}.
\end{align}
On the other hand, it is well-known that the Leray-Hopf weak solutions on $\mathbb{R}^3$ satisfy the following strong energy inequality
\begin{align}\label{se}
||u(t)||^2_{L^2(\mathbb{R}^3)}+\int_s^t\int_{\mathbb{R}^3}|\nabla u(\tau,x)|^2dxd\tau\leq ||u(s)||^2_{L^2(\mathbb{R}^3)}~for~a.e.~s\in[0,t].
\end{align}
Noticing that \eqref{eq1} implies $\lim_{s\rightarrow t_0^-}||u(s)||^2_{L^2(\mathbb{R}^3)}=||u(t_0)||^2_{L^2(\mathbb{R}^3)}$. This means by \eqref{se}
\begin{align}\label{se1}
||u(t)||^2_{L^2(\mathbb{R}^3)}+\int_{t_0}^t\int_{\mathbb{R}^3}|\nabla u(\tau,x)|^2dxd\tau\leq ||u(t_0)||^2_{L^2(\mathbb{R}^3)}~for~ t\geq t_0.
\end{align} This implies that
\begin{align}\label{sc1}
||u(t_0)||^2_{L^2(\mathbb{R}^3)}\geq \limsup_{\epsilon\rightarrow0^+}||u(t_0+\epsilon)||^2_{L^2(\mathbb{R}^3)}.
\end{align}According to \eqref{sc1} and \eqref{wc}, we conclude that \eqref{sc} is holding.
\end{proof}
If $u$ is a suitable weak solution, by using Theorem \ref{main1} and a similar argument as \cite{Leslie}, we can obtain a stronger conclusion.
\begin{corollary}\label{corollary2}
Assume $(u,P)$ be a suitable weak solution to the Navier-Stokes equations \eqref{NS} on $[0,T]\times\mathbb{R}^3$ and $u\in L^{2,\infty}([0,T],BMO(\mathbb{R}^3))$, then $u$ must satisfy the energy equality on $[0,T]$.
\end{corollary}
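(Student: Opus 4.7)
The plan is to extend the argument of Corollary \ref{corollary1} to the present setting, where $u$ is only a suitable weak solution and where the set of singular times could in principle be uncountable. Since $u\in L^{2,\infty}([0,T];BMO(\mathbb{R}^3))$, Theorem \ref{main1} furnishes uniform bounds on the Caffarelli--Kohn--Nirenberg invariants $A,B,C,D$ on all sufficiently small parabolic cylinders centred at any $z\in(0,T]\times\mathbb{R}^3$. Combined with the partial regularity theorem of \cite{Caffarelli}, this implies that the space-time singular set of $u$ has vanishing $1$-dimensional parabolic Hausdorff measure; in particular the set $\Sigma\subset[0,T]$ of singular times has $1/2$-dimensional Hausdorff measure zero, and on each connected component of $[0,T]\setminus\Sigma$ the solution is smooth. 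Theorem \ref{main3} then already delivers the energy equality on every compact sub-interval of the regular set.

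The heart of the matter is to show that $t\mapsto\|u(t)\|_{L^2(\mathbb{R}^3)}^2$ is left- and right-continuous across every $t_0\in\Sigma$; once this is in hand one integrates across $\Sigma$ exactly as in the proof of Corollary \ref{corollary1}, using the weak lower semicontinuity of the $L^2$ norm together with the strong energy inequality to pinch the energy from above and below. Fixing a sequence $t_k\to t_0^-$ and passing to a subsequence along which $|u(t_k)|^2\,dx\rightharpoonup\mathcal{E}_{t_0}$ weak-$\ast$, one may decompose $\mathcal{E}_{t_0}=|u(t_0)|^2\,dx+\nu$ with $\nu\geq0$ singular with respect to Lebesgue measure, and the whole task reduces to proving $\nu\equiv0$.

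To kill $\nu$, I would adapt the strategy of Leslie--Shvydkoy \cite{Leslie}, substituting their Type~I invariant bounds with the CKN-invariant bounds supplied by Theorem \ref{main1}. The concentration dimension inequality $D\geq1$ from Theorem \ref{main1} says that $\nu$ cannot charge any set of Hausdorff dimension strictly less than $1$, while $\nu$ is by construction supported on the spatial singular set $\Sigma_{t_0}\subset\mathbb{R}^3$. An iterated $\varepsilon$-regularity covering of $\Sigma_{t_0}$, built from the smallness of the CKN invariants on parabolic cylinders centred at regular points, then shows that $\Sigma_{t_0}$ can be covered by sets whose total $\mathcal{E}_{t_0}$-mass is arbitrarily small, contradicting $\nu\neq0$.

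The main obstacle is exactly this last step. In \cite{Leslie} the covering iteration is driven by a pointwise-in-time Type~I bound on $\|u(t)\|_{L^\infty}$, whereas here one only controls $\|u(t)\|_{BMO}$ in $L^{2,\infty}$ in time. Consequently the iteration at scale $r$ must be carried out using the positive-measure set of ``good'' times at which the $BMO$ norm is under control, and one must verify that the weighted CKN bounds of Theorem \ref{main1} are strong enough to deliver a scale-by-scale energy decay that closes the induction. Once this decay is in place, the vanishing of $\nu$, and hence the energy equality on $[0,T]$, follow at once.
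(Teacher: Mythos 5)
Your plan has a genuine gap at the step you treat as routine: the passage from ``energy equality on each regular interval plus two-sided continuity of $\|u(t)\|_{L^2}^2$ at each singular time'' to the energy equality on all of $[0,T]$. That concatenation is valid only when the set $\Sigma\subset[0,T]$ of singular times is finite (or at least countable), which is exactly why Corollary \ref{corollary1} is stated for finitely many singularities. Here the Caffarelli--Kohn--Nirenberg theorem \cite{Caffarelli} only gives $\mathcal{H}^{1/2}(\Sigma)=0$, so $\Sigma$ may a priori be an uncountable compact set of Cantor type. The function $E(t)=\|u(t)\|_{L^2}^2+2\int_0^t\|\nabla u\|_{L^2}^2\,ds$ that your pinching argument controls is then continuous, nonincreasing (by the strong energy inequality), and locally constant on $[0,T]\setminus\Sigma$ --- but such a function need not be constant: a Cantor staircase is continuous and locally constant off a closed null set while still decreasing. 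Equivalently, continuity of $E$ makes the defect measure $-dE$ nonatomic, and killing the slice defects $\nu_{t_0}$ one singular time at a time rules out atoms only; it does not exclude a nonatomic singular part of $-dE$ spread over an uncountable $\Sigma$. To close your route you would have to show $\Sigma$ is countable, or control the accumulated loss across $\Sigma$ in a summable, quantitative way, and nothing in the proposal does this.

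Two further remarks. First, the obstacle you single out as the ``heart of the matter'' --- running a Leslie--Shvydkoy-type covering iteration with only $L^{2,\infty}$-in-time $BMO$ control --- is in fact already resolved by the paper: Theorem \ref{main1} supplies the uniform Morrey bound $\sup_t \frac{1}{r}\int_{B_r(x)}|u(t)|^2\,dy\le C(M)$ at all small scales, and combined with $\mathcal{H}^1(\Sigma_{t_0})=0$ this gives $\mathcal{E}(\cup_i B_{r_i}(x_i))\le C(M)\sum_i r_i<\epsilon$ in one stroke; this is precisely the proof of Theorem \ref{main2}, with no scale-by-scale decay induction needed. Second, the paper's actual proof of this corollary avoids the singular set entirely and thereby the Cantor-staircase issue: the bound of Theorem \ref{main1} means $u\in L^\infty((0,T];\mathcal{M}^{2,1})$; since $\mathcal{M}^{2,1}$ is shift- and scaling-invariant, Cannone's theorem \cite{Cannone} embeds it into $B^{-1}_{\infty,\infty}$; interpolation with the enstrophy class $L^2H^1=L^2B^1_{2,2}$ places $u$ in the Onsager-critical class $L^3 B^{1/3}_{3,3}$, which yields the energy equality between a.e.\ pair of times irrespective of the structure of $\Sigma$, and letting $t_0\to 0$ with $\lim_{t_0\to0}\|u(t_0)\|_{L^2}=\|u_0\|_{L^2}$ finishes. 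You should either adopt that interpolation argument or supply the missing control of $\Sigma$; as written, the proposal does not prove the corollary.
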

\begin{proof}
From Theorem \ref{main1}, we obtain that $u\in L^\infty((0,T];\mathcal{M}^{2,1})$, where $\mathcal{M}^{2,1}$ is the Morrey space with integrability $2$ and rate $1$. Since the Morrey space $\mathcal{M}^{2,1}$ is invariant under shifts $f\to f(\cdot-x_0)$ and the rescaling $f(x)\to \lambda f(\lambda x)$, we have by Cannone's Theorem \cite{Cannone} that $u\in L^\infty B^{-1}_{\infty,\infty}$. Consequently, interpolation with the enstrophy space $L^2H^1=L^2B^1_{2,2}$ puts the solution $u$ into the Onsager-critical class $L^3((0,T]B^1_{3,3})$, from which we conclude that
\begin{align*}
\int_{\mathbb{R}^3}|u(t,\cdot)|^2dx+2\int_0^t\int_{\mathbb{R}^3}|\nabla u|^2dxdt=\int_{\mathbb{R}^3}|u(t_0,\cdot)|^2dx~for~a.e.~t_0\in(0,t].
\end{align*}By using the fact $\lim_{t_0\to0}||u(t_0)||_{L^2}=||u_0||_{L^2}$, we deduce the energy conservation
\begin{align*}
\int_{\mathbb{R}^3}|u(t,\cdot)|^2dx+2\int_0^t\int_{\mathbb{R}^3}|\nabla u|^2dxdt=\int_{\mathbb{R}^3}|u_0|^2dx~for~t\in[0,T].
\end{align*}
\end{proof}
\begin{remark}
Proposition \ref{prop} and Theorem \ref{main3} imply that
 though the solution $u$ to \eqref{NS}  experiences some potential type II singularity at first blow up time $T$, the energy equality may still holds on $[0,T]$. Hence,
Theorem \ref{main1}-\ref{main3} and Corollary \ref{corollary1}-\ref{corollary2} improve considerably the recent results in \cite{Leslie,Kozono}. Theorem \ref{main4}-\ref{main5} show a precise blow up rate for the $BMO$ norm of the solution $u$ to \eqref{NS} and improve considerably the results in \cite{Wang,He,Kozono1}.
\end{remark}
\begin{remark}
In recent work of Cheskidov-Luo \cite{Luo} showed that if  $u \in L^{2, \infty}\left([0, T] ; B_{\infty, \infty}^{0}(\mathbb{R}^3)\right)$, then the weak solution $u$ of \eqref{NS} equations satisfies energy equality. Where, we illustrate that $B M O \not \subset B_{\infty, \infty}^{0}$. For example, let $h(x)=\ln |x|$. Obviously, $h$ is in $BMO(\mathbb{R}^3)$. We shall show that $h$ is not in $B_{\infty, \infty}^{0}(\mathbb{R}^3)$. Indeed,
$$\left\|\ln|x|\right\|_{B^{0}_ {\infty, \infty}}=\sup _{j \geqslant-1}\left\| \Delta_{j} \ln |x|\right\|_{L^\infty},$$
where $$
	\Delta_{j} \ln |x| =\mathcal{F}^{-1}\left(\varphi (2^{-j} \xi) \widehat{\ln} |\xi|\right) =2^{j d} \int_{\mathbb {R}^{d}} \check{\varphi}\left(2^{j}(x-y)\right) \ln |y| d y.$$
Let $2^{j} y=z$, one has
$$
\begin{aligned}
\Delta_{j} \ln |x|&=\int_{\mathbb{R}^d} \check{\varphi}\left(2^{j} x-z\right) \ln \left(2^{-j} z\right) d z\\
&=\int_{\mathbb{R}^d} \check{\varphi}\left(2^{j} x-z\right)\left( \ln 2^{-j}+\ln z\right) d z\\
&=\int_{\mathbb{R}^d} \check{\varphi}\left(2^{j} x-z\right) \ln 2^{-j} dz +\int_{\mathbb{R}^d} \check{\varphi}\left(2^{j} x-z\right)\ln z d z.\\
\end{aligned}
$$
Thus we have
$$
\begin{aligned}
 	\Delta_{j} \ln (0)&=\int_{\mathbb{R}^d} \check{\varphi}\left(-z\right) \ln 2^{-j} dz +\int_{\mathbb{R}^d} \check{\varphi}\left(-z\right)\ln z d z\\
 	&=-j \ln 2\int_{\mathbb{R}^d} \check{\varphi}\left(-z\right)  dz +\int_{\mathbb{R}^d} \check{\varphi}\left(-z\right)\ln z d z.
\end{aligned}
 $$
Due to $\check{\varphi} \in\mathcal{S}(\mathbb{R}^d)$, one has that
$$
\int_{\mathbb{R}^d} \check{\varphi}\left(-z\right)\ln z d z< \infty.
$$
We now choose $ \int_{\mathbb{R}^d} \check{\varphi}\left(-z\right)  dz\neq 0$,
then
$$\lim_{j\to\infty}|j \ln 2\int_{\mathbb{R}^d} \check{\varphi}\left(-z\right)  dz|=\infty.$$
Clearly,
$$\left\| \Delta_{j} \ln |x|\right\|_{L^\infty} \sim j$$
and thus
$$\left\|\ln|x|\right\|_{B^{0}_ {\infty, \infty}}=\sup _{j \geqslant-1}\left\| \Delta_{j} \ln |x|\right\|_{L^\infty}=\infty.$$
It turns out that our result on energy conservation is of independent interest compare to the result of Cheskidov and Luo.
\end{remark}

Our paper is organized as follows: in Section 2, we establish some uniform bounds on invariant quantities $A(r)$, $B(r)$, $C(r)$ and $D(r)$; in Section 3, we prove Theorem \ref{main1}-\ref{main5} and Proposition \ref{prop}.

\section{The uniform bounds on invariant quantities}
\par
~~~~Our goals in this section are to show some uniform bounds on invariant quantities under the assumption $u\in L^{2,\infty}([0,T];BMO(\Omega))$.
\begin{lemma}\label{CB}
Let $(u,P)$ be a suitable weak solution of \eqref{NS} in $[0,T]\times\Omega$ satisfying
\begin{align*}
||u||_{L^{2,\infty}([0,T];BMO(\Omega))}=M<\infty.
\end{align*}Then it is holding
\begin{align*}
C(u,r,z)\leq C\frac{r}{\rho}C(u,\rho,z)+C(\frac{\rho}{r})^\frac{3}{2}M^\frac{3}{2}A^\frac{3}{4}(u,\rho,z)
\end{align*}for $0<r<\rho$ and $Q_\rho(z)\subset[0,T]\times\Omega$.
\end{lemma}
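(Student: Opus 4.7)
My plan is to follow the classical Caffarelli--Kohn--Nirenberg decomposition on $Q_\rho(z)$, in which $u$ is split into its spatial mean on $B_\rho(x)$ and its oscillation, but with the role usually played by the enstrophy $B(u,\rho,z)$ now taken by the $BMO$ norm via the John--Nirenberg inequality. Fix $z=(t,x)$ with $Q_\rho(z)\subset[0,T]\times\Omega$ and $0<r<\rho$, and set
\[
[u(s)]_\rho := \frac{1}{|B_\rho(x)|}\int_{B_\rho(x)} u(s,y)\,dy,\qquad v(s,y):=u(s,y)-[u(s)]_\rho.
\]
Because $|u|^3\le 8(|v|^3+|[u]_\rho|^3)$ pointwise, it suffices to estimate the two pieces separately.

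The mean piece is elementary. Hölder's inequality gives $|[u(s)]_\rho|^3\le C\rho^{-3}\int_{B_\rho(x)}|u(s,y)|^3\,dy$; multiplying by $|B_r|$, integrating over $(t-r^2,t)\subset(t-\rho^2,t)$, and dividing by $r^2$ produces exactly the first term $C(r/\rho)\,C(u,\rho,z)$ by direct algebra. For the oscillation I would interpolate between $L^2$ and $L^6$ on each time slice:
\[
\int_{B_\rho(x)}|v|^3\,dy \;\le\; \Big(\int_{B_\rho(x)}|v|^2\,dy\Big)^{3/4}\Big(\int_{B_\rho(x)}|v|^6\,dy\Big)^{1/4}.
\]
The $L^2$ factor is bounded by $\rho^{3/4}A(u,\rho,z)^{3/4}$ via $\int|v|^2\le\int|u|^2\le\rho A$, while John--Nirenberg gives $\int_{B_\rho(x)}|v|^6\le C\rho^3\|u(s)\|_{BMO}^6$, so combining yields the slice-wise estimate
\[
\int_{B_r(x)}|v|^3\,dy \;\le\; C\rho^{3/2}\,A(u,\rho,z)^{3/4}\,\|u(s)\|_{BMO}^{3/2}.
\]

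What remains is to integrate $\|u(s)\|_{BMO}^{3/2}$ over an interval of length $r^2$ using only the weak-$L^2$ control of $\|u(\cdot)\|_{BMO}$. Since $3/2<2$, the standard Kolmogorov / weak-type layer-cake inequality gives
\[
\int_{t-r^2}^{t}\|u(s)\|_{BMO}^{3/2}\,ds \;\le\; C\,(r^2)^{1-3/4}\|u\|_{L^{2,\infty}([0,T];BMO)}^{3/2} \;=\; C\,r^{1/2}M^{3/2},
\]
and dividing by $r^2$ produces the second term $C(\rho/r)^{3/2}M^{3/2}A(u,\rho,z)^{3/4}$. I expect this final step to be the only genuinely non-routine point: one must verify that the interpolation exponent $3/2$ sits \emph{strictly} below the Lorentz endpoint $2$, which is forced by the pairing with $A^{3/4}$ in the spatial Hölder split; this is what lets the weak-$L^2$ hypothesis still deliver a bona fide integrable bound with the sharp power $r^{1/2}$, and hence the precise scaling $(\rho/r)^{3/2}$ in the conclusion.
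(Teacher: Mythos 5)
Your proposal is correct and follows essentially the same route as the paper: the same mean--oscillation decomposition on $B_\rho(x)$, the same slice-wise H\"older interpolation between $L^2$ (controlled by $A(u,\rho,z)$) and $L^6$ (controlled by $\|u(s)\|_{BMO}$ via John--Nirenberg), and the same weak-$L^2$ time integration --- the paper simply carries out your cited Kolmogorov/layer-cake inequality by hand, splitting $\int_0^\infty\sigma^{1/2}\lvert\{s:\|u(s)\|_{BMO}>\sigma\}\rvert\,d\sigma$ at the optimized level $R=\sqrt{3}\,r^{-1}M$ to get the bound $C r^{1/2}M^{3/2}$. Your closing observation that the argument hinges on the exponent $3/2$ lying strictly below the Lorentz endpoint $2$ is exactly the mechanism that makes the paper's explicit optimization converge.
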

\begin{proof}
At almost every time we estimate
\begin{align}
\int_{B_r(x)}|u|^3dx\leq\int_{B_r(x)}|u-u_{\rho}|^3dx+C|B_r||u_{\rho}(x)|^3
=I_1+I_2
\end{align}where $u_\rho=\frac{1}{|B_{\rho}|}\int_{B_\rho(x)}udy$.

For $I_1$, we have
\begin{align}
I_1=&\int_{B_r(x)}|u-u_\rho|^\frac{3}{2}|u-u_\rho|^\frac{3}{2}dx\\\leq&(\int_{B_\rho(x)}|u-u_\rho|^2)^\frac{3}{4}
(\frac{1}{|B_\rho|}\int_{B_\rho(x)}|u-u_\rho|^6)^\frac{1}{4}|B_\rho|^{\frac{1}{4}}\notag\\
\leq&\rho^\frac{3}{2}A(\rho,z)^\frac{3}{4}||u(t)||^\frac{3}{2}_{BMO(\mathbb{R}^3)}.\notag
\end{align}

For $I_2$, we have
\begin{align}
I_2=|B_r(x)||\frac{1}{|B_\rho(x)|}\int_{B_\rho(x)}udy|^3\\
\leq C(\frac{r}{\rho})^3\int_{B_\rho(x)}|u|^3dx.\notag
\end{align}

Summing up the estimates for $I_1$ and $I_2$ and integrating with respect to time from $t-r^2$ to $t$, we obtain
\begin{align}\label{2.4}
C(u,r,z)=\frac{1}{r^2}\int\int_{Q_r(z)}|u|^3dxdt\leq
C\frac{r}{\rho}C(u,\rho,z)+(\frac{\rho}{r})^\frac{3}{2}r^\frac{-1}{2}\int_{t-r^2}^t||u(s)||^\frac{3}{2}_{BMO}ds
A^\frac{3}{4}(u,\rho,z).
\end{align}

 Using the assumption $||u||_{L^{2,\infty}([0,T];BMO(\mathbb{R}^3))}=M$, we deduce
\begin{align}\label{2.5}
&r^{-\frac{1}{2}}\int_{t-r^2}^t||u||^\frac{3}{2}_{BMO(\mathbb{R}^3)}ds\\
=&\frac{3}{2}r^{-\frac{1}{2}}\int_{0}^{\infty}\sigma^{\frac{1}{2}}|\{s\in[t-r^2,t]:||u(s,\cdot)||_{BMO(\mathbb{R}^3)}
>\sigma\}|d\sigma\notag\\
=&\frac{3}{2}r^{-\frac{1}{2}}\{\int_{0}^{R}\sigma^{\frac{1}{2}}|\{s\in[t-r^2,t]:||u(s,\cdot)||_{BMO(\mathbb{R}^3)}
>\sigma\}|d\sigma\notag\\&+
\int_{R}^{\infty}\sigma^{\frac{1}{2}}|\{s\in[t-r^2,t]:||u(s,\cdot)||_{BMO(\mathbb{R}^3)}>\sigma\}|d\sigma\}\notag\\
\leq&r^{-\frac{1}{2}}(R^\frac{3}{2}r^2+3R^{-\frac{1}{2}}M^2)\notag\\
=&2(3^{\frac{3}{4}})M^\frac{3}{2}\notag
\end{align}where we choose $R=\sqrt{3}r^{-1}M$. Substituting \eqref{2.5} into \eqref{2.4}, we complete the proof of this lemma.
\end{proof}
By using Lemma \ref{CB}, we deduce some uniform bounds on invariant quantities:
\begin{lemma}\label{ubi}
Let $(u,P)$ be a suitable weak solution of \eqref{NS} in $[0,T]\times\Omega$ satisfying
\begin{align*}
||u||_{L^{2,\infty}([0,T];BMO(\Omega))}=M<\infty.
\end{align*}Then it is holding
\begin{align}\label{estimate}
A(u,r,z)+ B(u,r,z)+C^\frac{7}{6}(u,r,z)+D^\frac{8}{7}(P,r,z)\leq C(d_z,M)~for~Q_{2r}(z)\subset Q_{d_z}(z)\subset
(0,T]\times\Omega
\end{align}where $d_z=dist(z,\partial((0,T]\times\Omega))$. In particular, fix $z\in(0,T]\times\Omega$, there exists a constant $r_0>0$ depending on $d_z$, such that if $r<r_0$ and $Q_r(z)\subset(0,T]\times\mathbb{R}^3$ then it is holding
\begin{align}\label{estimatew}
A(u,r,z)+B(u,r,z)+C^\frac{7}{6}(u,r,z)+D^\frac{8}{7}(P,r,z)\leq  C(M).
\end{align}
\end{lemma}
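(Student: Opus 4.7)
The plan is to close a self-improving iteration on the functional
\[ \Phi(r) := A(u,r,z) + B(u,r,z) + C(u,r,z)^{7/6} + D(P,r,z)^{8/7}, \]
driving $\Phi(r)$ downward from a scale comparable to $d_z$ at which it is a priori finite (by the suitable weak solution hypotheses $u\in L^3L^3$ and $P\in L^{3/2}L^{3/2}$). First I would establish two companion inequalities to Lemma \ref{CB}. From the local energy inequality \eqref{le} with a nonnegative cutoff $\phi$ that equals one on $Q_r(z)$ and is supported in $Q_\rho(z)$ (with $r\le \rho/2$), together with H\"older's inequality, one obtains the classical Caffarelli--Kohn--Nirenberg style bound
\[ A(r) + B(r) \le C\Bigl(\tfrac{\rho}{r}\Bigr)^{2}\bigl[C(\rho)^{2/3} + C(\rho)^{1/3}D(\rho)^{2/3}\bigr]. \]
Next, splitting the pressure as $P = P_1 + P_2$ where $P_1$ solves $-\Delta P_1 = \partial_i\partial_j(u_iu_j\chi_{B_\rho})$ and $P_2$ is harmonic on $B_\rho$, Calder\'on--Zygmund bounds plus the mean value property for harmonic functions yield a pressure estimate of the form
\[ D(r) \le C\Bigl[\Bigl(\tfrac{r}{\rho}\Bigr)^{\alpha}D(\rho) + \Bigl(\tfrac{\rho}{r}\Bigr)^{\beta} C(\rho)\Bigr] \]
for suitable fixed exponents $\alpha,\beta>0$.

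With these three ingredients in hand (Lemma \ref{CB}, local energy, pressure), I would set up the iteration. Fix a small ratio $\theta\in(0,1)$ and set $r=\theta\rho$. Raising each inequality to the appropriate power (so that $A,B$ enter linearly, $C$ at power $7/6$, $D$ at power $8/7$), every term on the right-hand side falls into one of two classes: either a small multiple of $\Phi(\rho)$ (small because of a positive power of $\theta$), or a term of the form $A(\rho)^{a}$, $C(\rho)^{b}$, or $D(\rho)^{c}$ with strictly sub-linear exponents $a<1$, $b<7/6$, $c<8/7$. The exponents $7/6$ and $8/7$ in the statement are precisely calibrated for this: the $A(\rho)^{3/4}$ in Lemma \ref{CB}, for instance, becomes $A(\rho)^{7/8}$ after raising to the $7/6$ power, hence sub-linear in $A$. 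By Young's inequality each such sub-linear term is bounded by $\varepsilon\Phi(\rho) + C_\varepsilon(M)$. Choosing $\theta$ and $\varepsilon$ small enough, the outcome is
\[ \Phi(\theta\rho) \le \tfrac{1}{2}\Phi(\rho) + c(M). \]

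The iteration $\Phi(\theta^{n}\rho_0)\le 2^{-n}\Phi(\rho_0) + 2c(M)$ with initial scale $\rho_0\sim d_z$ then yields $\Phi(r)\le C(d_z,M)$ for every admissible $r$, proving \eqref{estimate}. For \eqref{estimatew}, observe that the $d_z$-dependence of the final bound enters only through the initial term $2^{-n}\Phi(\rho_0)$; choosing $r<r_0$ with $r_0$ sufficiently small (so $n$ is large) drives this contribution below $c(M)$ and leaves an $M$-only bound. I expect the main obstacle to be obtaining the pressure estimate with scaling exponents compatible with the $D^{8/7}$ weight: because $P$ is nonlocal, the cutoff-and-harmonic-splitting has to be done carefully, and the resulting $\alpha,\beta$ must fit into the Young-inequality bookkeeping above. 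The local energy estimate and the Young-exponent calculus are standard once the correct functional $\Phi$ is identified.
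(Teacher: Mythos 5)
Your proposal is correct and follows essentially the same route as the paper's proof: the identical functional $G(r)=A+B+C^{7/6}+D^{8/7}$, the same three ingredients (Lemma \ref{CB} for the cubic term, the local energy inequality \eqref{le} for $A+B$, and the harmonic-splitting pressure bound $D(P,r)\le C\frac{r}{\rho}D(P,\rho)+C(\frac{\rho}{r})^{2}C(u,\rho)$), the same Young-inequality calibration of the exponents $7/6$ and $8/7$ (including the decisive point that $8/7<7/6$ keeps the pressure estimate's $C(u,\rho)$ contribution sub-critical), and the same geometric iteration with the $M$-only bound \eqref{estimatew} obtained by shrinking $r_0$. Two deviations are cosmetic and harmless: the paper tests \eqref{le} with a backward-heat-kernel-type $\phi$, yielding $C(\frac{r}{\rho})^{2}A(u,\rho)+C(\frac{\rho}{r})^{2}C(u,\rho)+C(\frac{\rho}{r})^{2}C^{1/3}(u,\rho)D^{2/3}(P,\rho)$ in place of your plain-cutoff bound (which, as stated, omits a linear $C(u,\rho)$ transport term that should be present but is sub-linear relative to $C^{7/6}$ and hence covered by your own bookkeeping), and the initial finiteness of $A+B$ at scale $\rho_0\sim d_z$ comes from the Leray--Hopf bounds $u\in L^\infty L^2\cap L^2H^1$ of Definition 1.1 rather than from $u\in L^3L^3$, $P\in L^{3/2}L^{3/2}$ alone.
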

\begin{proof}

 Without loss of generality, we set $z=0$ and $\rho\leq d_z$.
Let $\phi(t,x)=\chi(t,x)\psi(t,x)$ where $\chi$ is cut-off function which equals 1 in
$Q_{\frac{1}{2}\rho}$ and vanishes outside of $Q_{\frac{3}{4}\rho}$. Then let $\psi=(4\pi(r^2-t))^{-\frac{3}{2}}e^{-\frac{|x|^2}{4(r^2-t)}}$. Direct computations show that $\phi\geq0$ and
\begin{align*}
\partial_t\phi+\triangle\phi=&0~in~Q_{\frac{1}{2}\rho},\\
|\partial_t\phi+\triangle\phi|\leq& C\rho^{-5}~in~Q_{\rho},\\
C^{-1}r^{-3}\leq\phi\leq Cr^{-3};~|\nabla\phi|\leq& Cr^{-4}~in~Q_r,\\
\phi\leq C\rho^{-3};~|\nabla\phi|\leq& C\rho^{-4}~in~Q_\rho-Q_{\frac{3}{4}\rho}.
\end{align*}
Using $\phi$ as a test function in the local energy inequality \eqref{le}, we obtain
\begin{align}\label{AB}
A(u,r)+B(u,r)\leq& C(\frac{r}{\rho})^2A(u,\rho)+C(\frac{\rho}{r})^2C(u,\rho)+C(\frac{\rho}{r})^2C^\frac{1}{3}(u,\rho)D^\frac{2}{3}(P,\rho)\\
\leq& C(\frac{r}{\rho})^2A(u,\rho)+C(\frac{\rho}{r})^2C(u,\rho)+C(\frac{\rho}{r})^2D(P,\rho)\notag.
\end{align}

For $C(u,\rho)$, we have by using Lemma \ref{CB}
\begin{align}\label{C}
C(u,r)\leq C\frac{r}{\rho}C(u,\rho)+C(\frac{\rho}{r})^\frac{3}{2}M^\frac{3}{2}A^\frac{3}{4}(u,\rho).
\end{align}

It remains to show some bounds on $D(u,r)$. Let $\eta(x)$ be a cut-off function which equals 1 in $B_{\frac{3\rho}{4}}$ and vanishes outside of $B_\rho$. Let $P_1$ satisfy $-\Delta P_1=\partial_{x_i}\partial_{x_j}(u_iu_j\eta)$ and $P_2=P-P_1$. Then, it is clear that $\Delta P_2=0$ in $B_\frac{3\rho}{4}$. by using the Calder\'{o}n-Zygmund inequality, we have
\begin{align*}
\int_{B_\rho}|P_1|^{\frac{3}{2}}dx\leq C(\int_{B_\rho}|u|^3dx).
\end{align*}By the properties of harmonic functions, we infer that for $r\leq\frac{\rho}{2}$,
\begin{align*}
\int_{B_r}|P_2|^{\frac{3}{2}}dx\leq Cr^3\sup_{x\in B_r}|P_2(x)|^\frac{3}{2}\leq C(\frac{r}{\rho})^3\int_{B_\rho}|P_2|^{\frac{3}{2}}dx.
\end{align*}It then follows that for $0<r\leq\frac{\rho}{2}$
\begin{align*}
&\int_{B_r}|P|^\frac{3}{2}dx\\\leq& C(\int_{B_\rho}|u|^3dx)+C(\frac{r}{\rho})^3\int_{B_\rho}|P-P_1|^\frac{3}{2}dx\\
\leq& C(\int_{B_\rho}|u|^3dx)+C(\frac{r}{\rho})^3\int_{B_\rho}|P|^\frac{3}{2}dx.
\end{align*}Integrating with respect to t from $-r^2$ to 0, we obtain, using H\"{o}lder inequality,
\begin{align*}
\int_{Q_r}|P|^{\frac{3}{2}}dxdt\leq C\int_{Q_\rho}|u^3|dxdt+C(\frac{r}{\rho})^3\int_{Q_\rho}|P|^\frac{3}{2}dxdt.
\end{align*} This implies
\begin{align}\label{D}
D(P,r)\leq C\frac{r}{\rho}D(P,\rho)+C(\frac{\rho}{r})^2C(u,\rho).
\end{align}

According to \eqref{D}, we have by using Young's inequality
\begin{align}\label{D1}
D(P,r)^\frac{8}{7}\leq& C(\frac{r}{\rho})^\frac{8}{7}D^\frac{8}{7}{(P,\rho)}+C(\frac{\rho}{r})^\frac{16}{7}C(u,\rho)^\frac{8}{7}\\
\leq& C(\frac{r}{\rho})^\frac{8}{7}D^\frac{8}{7}{(P,\rho)}+(\frac{r}{\rho})^\frac{7}{6}C(u,\rho)^\frac{7}{6}
+C(\frac{\rho}{r})^{168}\notag.
\end{align}

Similar computations show by using \eqref{C}
\begin{align}\label{C1}
C^\frac{7}{6}(u,r)\leq C(\frac{r}{\rho})^\frac{7}{6}C^\frac{7}{6}(u,\rho)+(\frac{r}{\rho})^2A(u,\rho)+C(\frac{\rho}{r})^{21}M^{14}.
\end{align}

By using Young's inequality, we deduce from \eqref{AB}
\begin{align}\label{AB1}
A(u,r)+B(u,r)\leq C(\frac{r}{\rho})^2A(u,\rho)+(\frac{r}{\rho})^\frac{7}{6}C^\frac{7}{6}(u,\rho)
+(\frac{r}{\rho})^\frac{8}{7}D^\frac{8}{7}(P,\rho)+C((\frac{\rho}{r})^{21}+(\frac{\rho}{r})^{24}).
\end{align}

Summing up the estimates \eqref{D1}, \eqref{C1} and \eqref{AB1}, we obtain by defining $G(r)= A(u,r)+B(u,r)+C^\frac{7}{6}(u,r)+D^\frac{8}{7}(P,r)$
\begin{align}\label{decay}
G(r)\leq C(\frac{r}{\rho})^\frac{8}{7}G(\rho)+(C+M^{14})(\frac{\rho}{r})^{168}
\end{align} where we have used the fact $\frac{r}{\rho}<1$.

 Fix $\theta=\min\{\frac{1}{2},\frac{1}{C^7}\}$ and set $r=\theta^{k}\rho$ for $k\in\mathbb{N}$, \eqref{decay} yields
\begin{align}\label{decay1}
G(\theta^{k}\rho)\leq\theta G(\theta^{k-1}\rho)+C(1+M^{14})\theta^{-168}.
\end{align}By a standard iterative argument, we deduce that
\begin{align}\label{result1}
G(r)\leq C\frac{r}{\rho}G(\rho)+C(1+M^{14})~for~ r\leq\frac{\rho}{2}.
\end{align}This means \eqref{estimate}.

In particular, by choosing $r_0$ satisfying $C\frac{r_0}{d_z}G(d_z)\leq M$, we obtain \eqref{estimatew}.


\end{proof}

We conclude this section by giving a uniform bound on $C(u,r,z)+D(P,r,z)$ when $||u||_{L^{2,\infty}(BMO(\mathbb{R}^3))}$ is suitable small.
\begin{lemma}\label{epsilon}
Let $(u,P)$ be a suitable weak solution of \eqref{NS} in $[0,T]\times\mathbb{R}^3$. Fix $\varepsilon>0$, there exists constant $c$ and $r^*$ such that if
\begin{align*}
||u||_{L^{2,\infty}([0,T];BMO(\mathbb{R}^3))}\leq c,
\end{align*}then it is holding
\begin{align}\label{estimate1}
C(u,r,z)+D(P,r,z)\leq\varepsilon, ~for~0<r\leq r^*,
\end{align}where $r^*$ is depending on $d_z$.
\end{lemma}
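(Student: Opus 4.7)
The plan is to run two nested geometric iterations, one for $C(u,r,z)$ and then one for $D(P,r,z)$, leveraging the smallness of $c$ to produce arbitrarily small values at sufficiently small scales. Without loss of generality assume $c\le 1$ (enlarging $c$ only weakens the hypothesis). Lemma~\ref{ubi} then supplies an absolute a priori bound
\begin{equation*}
A(u,\rho,z)+B(u,\rho,z)+C^{7/6}(u,\rho,z)+D^{8/7}(P,\rho,z)\le K_0
\end{equation*}
valid for every $\rho\in(0,r_0]$, where $r_0=r_0(d_z)>0$ and $K_0$ is independent of $c$ and of $z$. This is the crucial input that turns the $M$-dependent constants of Lemma~\ref{CB} into $c$-small constants in what follows.

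Next I revisit Lemma~\ref{CB} with $M=c$: for $r=\theta\rho$ and any $\theta\in(0,1/2]$,
\begin{equation*}
C(u,\theta\rho,z)\le C_\ast\theta\,C(u,\rho,z)+C_\ast\theta^{-3/2}c^{3/2}A^{3/4}(u,\rho,z).
\end{equation*}
I fix $\theta\in(0,1)$ once and for all so that $C_\ast\theta\le 1/2$. Using $A\le K_0$, this becomes the linear recursion $C(u,\theta\rho,z)\le\tfrac12 C(u,\rho,z)+C_1c^{3/2}$ with $C_1=C_1(K_0,\theta)$ absolute. Iterating $k$ times from $\rho=r_0$ yields $C(u,\theta^k r_0,z)\le 2^{-k}K_0^{6/7}+2C_1c^{3/2}$. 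Given any target $\varepsilon'>0$, I first choose $c$ so small that $2C_1c^{3/2}\le\varepsilon'/2$, then $k_1$ so large that $2^{-k_1}K_0^{6/7}\le\varepsilon'/2$; this gives $C(u,\theta^{k_1}r_0,z)\le\varepsilon'$, and by monotonicity of the raw integral $r^2C(u,r,z)$ in $r$ one obtains $C(u,r,z)\le\theta^{-2}\varepsilon'$ for every $r\le\theta^{k_1}r_0$.

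The pressure estimate extracted from the proof of Lemma~\ref{ubi}, namely
\begin{equation*}
D(P,\theta\rho,z)\le C_\ast\theta\,D(P,\rho,z)+C_\ast\theta^{-2}C(u,\rho,z),
\end{equation*}
now has an inhomogeneity of order $\theta^{-2}\varepsilon'$ at scales below $r_1:=\theta^{k_1}r_0$. A second iteration, still with the same $\theta$, yields $D(P,\theta^{k}r_1,z)\le 2^{-k}K_0^{7/8}+C_2\varepsilon'$. Setting $\varepsilon'=\min\{\theta^2\varepsilon/4,\,\varepsilon/(4C_2)\}$ and choosing $k_2$ so that $2^{-k_2}K_0^{7/8}\le\varepsilon/4$, I obtain $C(u,r^*,z)+D(P,r^*,z)\le\varepsilon$ at $r^*:=\theta^{k_1+k_2}r_0$; the same monotonicity argument extends this bound to every $r\le r^*$, the $\theta^{-2}$ loss being absorbed into the initial choice of $\varepsilon'$.

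The main technical point is the strict order of the quantifier choices. $\theta$ must be pinned down first from the absolute contraction constant $C_\ast$ of Lemmas~\ref{CB} and~\ref{ubi}; this fixes $C_1$ and $C_2$. Then $\varepsilon'$ is fixed as a function of $\varepsilon$ and $C_2$; only afterwards is $c=c(\varepsilon)$ chosen small enough to kill the $c^{3/2}$ tail, and finally $k_1,k_2$ are chosen depending on $\varepsilon$ and $K_0$. The resulting $r^*$ depends on $d_z$ through $r_0$ and on $\varepsilon$ through $k_1+k_2$; it does not depend on $c$ beyond the already-fixed smallness threshold. No circularity arises because none of the iteration counts feed back into the constants $C_1,C_2$.
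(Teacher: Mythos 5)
Your proposal is correct and follows essentially the same route as the paper's proof: both use the uniform bounds of Lemma~\ref{ubi} to control $A(u,\rho,z)$ and the starting values, iterate the recursion \eqref{C} (Lemma~\ref{CB} with $M=c$) to make $C(u,\cdot,z)$ small at dyadic scales, then feed that smallness into the pressure recursion \eqref{D} in a second iteration, with the same quantifier order (contraction step fixed first, then $\varepsilon'$, $c$, and the iteration counts); your only addition is the explicit monotonicity interpolation from dyadic scales to all $r\le r^*$, which the paper leaves implicit. The one blemish is a harmless bookkeeping slip at the end: the $\theta^{-2}$ loss also multiplies the geometric term $2^{-k_2}K_0^{7/8}$ in the bound for $D$, so $k_2$ must be chosen with $\theta^{-2}2^{-k_2}K_0^{7/8}\le\varepsilon/4$ (and $\varepsilon'$ taken with an extra factor $\theta^{2}$ against $C_2$), an adjustment your stated quantifier ordering already permits.
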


\begin{proof}
Without loss of generality, we assume $c\leq 1$. Firstly, from Lemma \ref{ubi}, we have that there exists $r_0>0$ depending on $d_z$ such that if $\rho\leq r_0$, then it is holding
\begin{align}\label{md}
&C(u,\rho,z)+D(P,\rho,z)+A(u,\rho,z)+B(u,\rho,z)\leq C.
\end{align} Then we deduce by choosing $\rho\leq r_0$ in \eqref{C} and using \eqref{md}
\begin{align*}
&C(u,r,z)\\\leq& \frac{r}{\rho}C(u,\rho,z)+C(\frac{\rho}{r})^{\frac{3}{2}}c^\frac{3}{2}A^\frac{3}{4}(u,\rho,z)\notag\\
\leq& \frac{r}{\rho}C(u,\rho,z)+Cc^\frac{3}{2}(\frac{\rho}{r})^\frac{3}{2}\notag
\end{align*}for $r\leq\rho$.
We now take $r=2^{-k}r_0$ and obtain by a standard iterative argument that
\begin{align}\label{mC}
&C(u,2^{-k}r_0,z)\\\leq& \frac{1}{2}C(u,2^{-k+1}r_0,z)+C(2c)^\frac{3}{2}\notag\\
\leq&2^{-k}C(u,r_0,z)+\sum_{j=0}^{k}2^{-j}C(2c)^{\frac{3}{2}}\notag\\
\leq&2^{-k}C(u,r_0,z)+C(2c)^{\frac{3}{2}}.\notag
\end{align}
Similarly, by using \eqref{D} and \eqref{mC}, we get
\begin{align}\label{mD}
&D(u,2^{-k_0k}r_0,z)\\\leq& \frac{1}{2}D(u,2^{(-k_0(k-1))}r_0,z)+C2^{2k_0}(2^{-k_0(k-1)}C(u,r_0,z)+C(2c)^\frac{3}{2})\notag\\
\leq&2^{-k}D(u,2^{-k_0}r_0,z)+C2^{2k_0}(2^{-k_0(k-1)}C(u,r_0,z)+C(2c)^\frac{3}{2})\notag
\end{align}where we choose $k_0$ satisfying $C2^{-k_0}\leq\frac{1}{2}$.

Summing up \eqref{mC} and \eqref{mD}, we obtain by using \eqref{md} that
\begin{align*}
&C(u,2^{-kk_0}r_0,z)+D(P,2^{-kk_0}r_0,z)\\\leq&(2^{-k}+2^{-kk_0})(C(u,r_0,z)+D(P,2^{-k_0}r_0,z))+C2^{2k_0}(2^{-kk_0+k_0}C(u,r_0,z)+(2c)^\frac{3}{2})\\\leq&
C(2^{-k}+2^{-kk_0})+C2^{2k_0}(2^{-kk_0+k_0}C+(2c)^\frac{3}{2}).
\end{align*} We now choose $K$ satisfying $C(2^{-K}+2^{-Kk_0}+2^{-Kk_0+3k_0})\leq\frac{\varepsilon}{2}$ and $r^*=2^{-Kk_0}r_0$, then we take $c=\frac{1}{2}(\frac{\varepsilon}{2^{2k_0+1}C})^{\frac{2}{3}}$ and conclude the Lemma \ref{epsilon}.
\end{proof}
\section{The proof of main theorems}
\par
~~~~We first recall two well-known results about the suitable weak solutions to \eqref{NS}.
\begin{proposition}\cite{Lin}\label{p1}
Let $(u^k,P^k)$ be a sequence of weak solutions of \eqref{NS} in $Q_1$
such that $$||u^k||_{L^\infty([-1,0];L^2(B))}+||P^k||_{L^\frac{3}{2}(Q)}+||\nabla u^k||_{L^2(Q)}\leq E$$ and $(u^k,P^k)$ satisfies the local energy inequality \eqref{le}. Suppose that $(u,P)$ is the weak limit of $(u^k,P^k)$; then (u,P) is a suitable weak solution of \eqref{NS} on $Q$.
\end{proposition}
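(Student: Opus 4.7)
The plan is to deduce the conclusion in two stages: first extract subsequential strong and weak limits from the given uniform bounds, and then pass to the limit in both the weak formulation of \eqref{NS} and in the local energy inequality \eqref{le}. I would fix $Q' \subset\subset Q$ and work with test functions supported in $Q'$ throughout.

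First I would collect the compactness information. The hypothesis gives $u^k$ uniformly bounded in $L^\infty_t L^2_x \cap L^2_t H^1_x$ on $Q$, so by interpolation $u^k$ is uniformly bounded in $L^{10/3}(Q)$. Using the equation $\partial_t u^k = \Delta u^k - \nabla \cdot(u^k \otimes u^k) - \nabla P^k$ together with the uniform $L^{3/2}$ bound on $P^k$ and the $L^{5/3}$ bound on $u^k \otimes u^k$, one sees that $\partial_t u^k$ is uniformly bounded in $L^{5/3}(-1,0;H^{-m}(B))$ for sufficiently large $m$. Combining this with the $H^1_x$ bound, the Aubin--Lions lemma then yields a subsequence with $u^k \to u$ strongly in $L^2(Q')$, and hence by interpolation with the $L^{10/3}$ bound, strongly in $L^p(Q')$ for every $p < 10/3$, in particular in $L^3(Q')$. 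Simultaneously we have $\nabla u^k \rightharpoonup \nabla u$ in $L^2(Q)$ and $P^k \rightharpoonup P$ in $L^{3/2}(Q)$.

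Passing to the limit in the distributional formulation of \eqref{NS} is then standard: the linear terms pass via weak convergence, the nonlinear term $u^k \otimes u^k \to u \otimes u$ in $L^{3/2}(Q')$ via the strong $L^3$ convergence, and $\nabla P^k \rightharpoonup \nabla P$ as distributions. Hence $(u,P)$ satisfies \eqref{NS} in the sense of distributions on $Q$, and the integrability $u \in L^3(Q)$, $P \in L^{3/2}(Q)$ holds by lower semicontinuity.

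The more delicate step is passing to the limit in the local energy inequality \eqref{le} for a nonnegative test function $\phi \in C^\infty_c((0,T]\times \Omega)$ supported in $Q'$. On the right-hand side, $|u^k|^2 \to |u|^2$ strongly in $L^{5/3}(Q')$ handles the $(\partial_t \phi + \Delta \phi)$ term; the cubic term $|u^k|^2 u^k \cdot \nabla \phi$ converges via strong $L^3$ convergence, and the pressure--velocity term $P^k u^k \cdot \nabla \phi$ converges because $P^k \rightharpoonup P$ in $L^{3/2}$ against the strongly convergent $u^k \in L^3$. On the left-hand side, for the dissipation $\int\!\!\int |\nabla u^k|^2 \phi$ I would use the weak lower semicontinuity of $v \mapsto \int\!\!\int |v|^2 \phi$ on $L^2$ since $\phi \geq 0$. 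For the energy term $\int |u^k(t)|^2 \phi(t)\,dx$, I would exploit that $u^k \to u$ strongly in $C([-1,0]; H^{-m}(B))$ (from the $\partial_t u^k$ bound) together with the uniform $L^2_x$ bound to get, for almost every $t$, $u^k(t) \rightharpoonup u(t)$ in $L^2(B)$, and then apply weak lower semicontinuity again. Combining these inequalities yields \eqref{le} for $(u,P)$, completing the proof that $(u,P)$ is a suitable weak solution on $Q$.

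The main obstacle is justifying the passage to the limit in the pressure--velocity product and in the pointwise-in-time kinetic energy term on the left-hand side, both of which require the strong $L^3_{loc}$ convergence of $u^k$ together with the time-regularity extracted from the equation itself; once strong convergence is in hand the remaining algebra is routine.
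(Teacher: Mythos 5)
Your proof is correct and is essentially the standard argument of the cited reference \cite{Lin}, which the paper invokes without reproducing: Aubin--Lions compactness yielding strong local $L^3$ convergence of $u^k$, weak convergence of $\nabla u^k$ in $L^2$ and $P^k$ in $L^{3/2}$, weak--strong convergence in the pressure--velocity term, and weak lower semicontinuity (after identifying $u^k(t)\rightharpoonup u(t)$ in $L^2$ via the equicontinuity in $H^{-m}$) for the two quadratic terms on the left of \eqref{le}. The only slip is cosmetic: since $P^k$ is bounded only in $L^{3/2}(Q)$, the term $\nabla P^k$ limits the time integrability, so $\partial_t u^k$ is bounded in $L^{3/2}(-1,0;H^{-m}(B))$ rather than $L^{5/3}$ --- exactly the exponent the paper itself uses in its blow-up argument --- and this changes nothing in the Aubin--Lions step.
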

\begin{proposition}\cite{Caffarelli,Lin,Ladyzhenskaya}\label{p2}
Let $(u,P)$ be a suitable weak solution of \eqref{NS} in $Q_r(z)$. There exists $\varepsilon_0>0$ such that if
\begin{align*}
\frac{1}{r^2}\int_{Q_r(z)}(|u|^3+|P|^\frac{3}{2})dxdt\leq\varepsilon_0,
\end{align*}then $u$ is regular in $Q_\frac{r}{2}(z)$.
\end{proposition}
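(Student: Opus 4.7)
The plan is to prove this classical $\varepsilon$-regularity criterion (Caffarelli-Kohn-Nirenberg / Lin) by iterating the scale-invariant quantities $A,B,C,D$ on a geometric sequence of parabolic cylinders, and then invoking a standard higher-integrability bootstrap. After rescaling via \eqref{scaling} I may assume $r=1$, $z=0$, and the hypothesis becomes $C(u,1)+D(P,1)\leq\varepsilon_0$ on $Q_1$.

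The first step is to establish recursive decay estimates for $A,B,C,D$ analogous to those in the proof of Lemma \ref{ubi}, but without the BMO hypothesis. Testing the local energy inequality \eqref{le} against the backward-heat-kernel cutoff used there gives exactly \eqref{AB}, and the pressure decay \eqref{D} holds for the same reasons (Calder\'on-Zygmund on the local piece $P_1$ plus the mean-value property on the harmonic piece $P_2$), so both import verbatim. In place of the BMO-based Lemma \ref{CB}, I would use a Gagliardo-Nirenberg interpolation
\begin{equation*}
C(u,r)\leq C(r/\rho)^3 C(u,\rho)+C\,A^{3/4}(u,\rho)\,B^{3/4}(u,\rho),
\end{equation*}
obtained by splitting $u=(u-u_\rho)+u_\rho$ and estimating the mean-zero piece through Poincar\'e-Sobolev on $B_\rho$.

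Next, I would combine the three recursions with Young's inequality to form a single functional $G(r)=A(u,r)+B(u,r)+C^{7/6}(u,r)+D^{8/7}(P,r)$ exactly as in Lemma \ref{ubi}. Fixing a small dilation ratio $\theta$ makes the geometric factor strictly less than $1/2$, and iterating the resulting recursion
\begin{equation*}
G(\theta^k)\leq \tfrac{1}{2}\,G(\theta^{k-1})+K(\theta)\,\varepsilon_0^{\alpha}
\end{equation*}
yields $G(\theta^k)\leq C\varepsilon_0^{\alpha}+2^{-k}G(1)$, so $C(u,r_*)+D(P,r_*)$ can be made arbitrarily small at some scale $r_*<1/2$ provided $\varepsilon_0$ is small enough.

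The final step is the standard Serrin-type higher-integrability bootstrap: once $C+D$ falls below a universal threshold at \emph{one} small scale, a Moser-type iteration on the local energy inequality upgrades $u$ to $L^\infty$ on a concentric sub-cylinder (this is the ``small-energy $\Rightarrow$ regularity'' lemma of Caffarelli-Kohn-Nirenberg / Lin). The main technical obstacle is the order in which the constants must be chosen: $\theta$ has to be fixed first so that the geometric decay factor $\theta$ beats the loss $(\rho/r)^2$ coming from the energy step and the $(\rho/r)^2$ in the pressure estimate, and only then can $\varepsilon_0$ be chosen small enough that the additive forcing $K(\theta)\varepsilon_0^{\alpha}$ does not spoil the contraction. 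The nonlocality of the pressure is handled throughout via the split $P=P_1+P_2$, which is precisely what makes the linear decay $r/\rho$ in \eqref{D} available despite the quadratic nonlinearity hidden in $P_1$.
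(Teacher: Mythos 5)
Your outline has a genuinely circular final step. Everything you do before it --- rescaling via \eqref{scaling}, importing the energy recursion \eqref{AB} and the pressure decay \eqref{D}, replacing Lemma \ref{CB} by interpolation, and iterating the functional $G$ --- only propagates the hypothesis to smaller scales: at best it shows $C(u,r_*)+D(P,r_*)\leq\varepsilon$ at some scale $r_*<1/2$, which by scale invariance is exactly the assumption you started from, so nothing has been gained. You then close by invoking ``the small-energy $\Rightarrow$ regularity lemma of Caffarelli--Kohn--Nirenberg / Lin,'' but that lemma \emph{is} Proposition \ref{p2}, the statement under proof. The entire analytic content of the proposition is the passage from smallness of scale-invariant integral quantities to a pointwise $L^\infty$ bound, and no mechanism for that passage appears in your proposal: in \cite{Caffarelli} it is a genuine De Giorgi--Moser truncation iteration with an infinite family of test functions in the local energy inequality \eqref{le} together with dyadic pressure decompositions; in \cite{Lin} and \cite{Ladyzhenskaya} it is a blow-up/compactness argument comparing the solution with the linear Stokes system to obtain Campanato-type oscillation decay and hence H\"older continuity. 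Note that the paper itself offers no proof but quotes the result from \cite{Caffarelli,Lin,Ladyzhenskaya}; any self-contained proof must reproduce one of these mechanisms, and a one-sentence appeal to ``a Moser-type iteration'' that is attributed to the very lemma being proved does not.

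Two secondary problems, both repairable but worth flagging. First, your substitute for Lemma \ref{CB} is misstated: splitting $u=(u-u_\rho)+u_\rho$ gives $C(u,r)\leq C\frac{r}{\rho}\,C(u,\rho)+C\bigl(\frac{\rho}{r}\bigr)^{3/2}A^{3/4}(u,\rho)\,B^{3/4}(u,\rho)$; the mean part decays only like $r/\rho$ (the spatial gain $(r/\rho)^3$ is eaten by the factor $\rho^2/r^2$ from the time integration and normalization), and the Poincar\'e--Sobolev term carries the negative power $(\rho/r)^{3/2}$ that you dropped. Second, with $A^{3/4}B^{3/4}$ in place of the paper's $M^{3/2}A^{3/4}$, the recursion for $G$ is no longer affine in $G$: after Young's inequality this term enters at power $7/4$, so the claimed contraction $G(\theta^k)\leq\frac{1}{2}G(\theta^{k-1})+K(\theta)\varepsilon_0^{\alpha}$ is unjustified when $G(1)\sim A(u,1)+B(u,1)$ is of order one, as it is here since only $C(u,1)+D(P,1)$, not the energy, is assumed small. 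The standard repair is to use the local energy inequality once at the first step to get $A(u,1/2)+B(u,1/2)\leq C\bigl(\varepsilon_0^{2/3}+\varepsilon_0+\varepsilon_0^{1/3}\varepsilon_0^{2/3}\bigr)$ via H\"older on $\iint |u|^2$, so that all four quantities are small before the iteration begins; only then does the superlinear term stay subordinate. With those corrections your scheme would yield uniform smallness of $A,B,C,D$ at all small scales and all nearby centers --- a useful and true statement, but still short of regularity without the truncation or compactness step described above.
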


{\bf We now begin to prove Theorem \ref{main1}}.
\begin{proof}
The estimates (1.8)-(1.9) are direct conclusions of lemma \ref{ubi}.

For any $z=(x,T)\in \Omega\times\{T\}$, set $d_x=dist(x,\partial\Omega)$. From (1.8) and the weak lower semicontinuity of the $L^2$ norm of $u$, we get
\begin{align*}
\mathcal{E}(B_r(x))\leq\lim_{t\rightarrow T^-}\int_{B_r(x)}|u(t)|^2dy\leq C(d_x,M)r~for~r\leq\frac{d_x}{2}.
\end{align*}By the definitions of local dimension $d(x,\mathcal{E})$, it is clear that $d(x,\mathcal{E})\geq1$.

We now prove $D\geq1$. For any compact $S\subset\Omega$ with $dim_{\mathcal{H}}(S)=s^*<1$, by the definition of Hausdorff dimension, we have $H^{s_1}(S)=0$ for $s^*<s_1<1$. Then fixing $0<\delta<dist(S,\partial\Omega)$, there exists a collection of open balls $\{B_{r_i}(x_i)\}_{i=1}^{\infty}$ satisfying $r_i\leq\delta$ such that $S\subset\cup_{i=1}^\infty B_{r_i}(x_i)\subset\Omega$ and $\sum_{i=1}^\infty r^{s_1}_i\leq1$. From this facts, we deduce
\begin{align*}
\mathcal{E}(S)\leq\sum_{i=1}^\infty\mathcal{E}(B_{r_i}(x_i))\leq C(M)\sum_{i=1}^\infty r_i
\leq C(M)\delta^{1-s_1}\sum_{i=1}^\infty r^{s_1}_i\rightarrow 0~as~\delta\rightarrow0.
\end{align*}This means $D\geq 1$.
\end{proof}

{\bf The proof of Theorem \ref{main2}
\begin{proof}
Assume $supp\phi(T,x)\equiv\bar{\Omega}_0\subset\Omega$ and $d=dist(supp\phi,\partial(\Omega\times[0,T]))$. From the proof of Lemma \ref{ubi}, it is clear that
\begin{align}\label{UA}
\sup_{T-r^2\leq t<T,x\in\bar{\Omega}_0}\frac{1}{r}\int_{B_r(x)}|u(t)|^2dy\leq C(M,d)~for~0<r\leq\frac{d}{2}.
\end{align}

By the weak lower semicontinuity of the $L^2$ norm of $u$, we see that to prove Theorem \ref{main2} it is enough to prove
\begin{align}\label{aim}
\lim_{t\rightarrow T^-}\int_{\bar{\Omega}_0}|u(t)|^2dx=\int_{\bar{\Omega}_0}|u(T)|^2dx.
\end{align}

Let $\Sigma$ be the set of all singular points of $u$ at time $T$. By the well-known Caffarelli-Kohn-Nirenberg Theorem \cite{Caffarelli}, we have
\begin{align*}
H^1(\Sigma)=0.
\end{align*}This fact implies that for each $\epsilon>0$, there exists a countable family of sets of the form
\begin{align*}
B^\epsilon_i=B_{r^\epsilon_i}(x^\epsilon_i)\times\{t=T\}
\end{align*}such that
\begin{align}\label{HM}
r^\epsilon_i\leq\frac{d}{2},~\Sigma\cap(\bar{\Omega}_0\times\{t=T\})\subset\cup B^\epsilon_i,~\sum r^\epsilon_i<\epsilon.
\end{align}

Fix $\varepsilon>0$ and let $$\epsilon=\frac{\varepsilon}{8C(||u_0||_{L^2},M,d)}.$$
Then, by \eqref{UA} and \eqref{HM}, we have
\begin{align}\label{sb}
&|\int_{\cup B^\epsilon_i}|u(t)|^2dx-\int_{\cup B^\epsilon_i}|u(T)|^2dx|\\
\leq&\sum_i\int_{B^\epsilon_i}|u(t)|^2dx+\sum_i\int_{B^\epsilon_i}|u(T)|^2dx\notag\\
\leq& 2C(||u_0||_{L^2},M,d)\sum_ir^\epsilon_i\notag\\<&2C(||u_0||_{L^2},M,d)\epsilon\notag\\
\leq&\frac{\varepsilon}{4}\notag
\end{align}for all $t\in[T-(\frac{d}{2})^2,T]$, where we have used the fact $\int_{\cup B^\epsilon_i}|u(T)|^2dx\leq\liminf_{t\rightarrow T^-}\int_{\cup B^\epsilon_i}|u(t)|^2dx$ due to the weak lower semicontinuity of the $L^2$ norm of $u$.

Set $$\omega^\epsilon\equiv(\bar{\Omega}_0\times\{t=T\})-\cup B^\epsilon_i.$$

For each $z\in\omega^\epsilon$, there exists a non-empty neighborhood $\mathcal{O}_z$ such that $u$ is H\"{o}lder continuous on $\mathcal{O}_z\cap([0,T]\times\bar{\Omega}_0)$. Since $\omega^\epsilon$ is compact, there
exists a non-empty neighborhood $\mathcal{O}^\epsilon_\omega$ of the set $\omega^\epsilon$ such that $u$ is continuous in $\bar{\mathcal{O}^\epsilon_\omega}\cap([0,T]\times\bar{\Omega}_0).$ Hence,
\begin{align}\label{rb}
|\int_{\omega^\epsilon}|u(t)|^2dx-\int_{\omega^\epsilon}|u(T)|^2dx|<\frac{\varepsilon}{2}
\end{align} for $|T-t|$ small enough. Combining \eqref{sb} and \eqref{rb}, we obtain
\begin{align}
&\lim_{t\rightarrow T^-}|\int_{\Omega_0}|u(t)|^2dx-\int_{\Omega_0}|u(T)|^2dx|\\
\leq&\limsup_{t\rightarrow T^-}|\int_{\omega^\epsilon}|u(t)|^2dx-\int_{\omega^\epsilon}|u(T)|^2dx|+\limsup_{t\rightarrow T^-}|\int_{\cup B^\epsilon_i}|u(t)|^2dx-\int_{\cup B^\epsilon_i}|u(T)|^2dx|\notag\\
<&\varepsilon.\notag
\end{align}This implies \eqref{aim} by taking $\varepsilon\rightarrow0$.
\end{proof}

{\bf We now begin to prove Theorem \ref{main3}}
\begin{proof}
Since $u\in L^{2,\infty}([0,T),BMO(\mathbb{R}^3))$, we deduce that from Theorem \ref{main2}
\begin{align}
&\int_{\mathbb{R}^3}|u(t)|^2\phi dx+2\int_{0}^t\int_{\mathbb{R}^3}|\nabla u|^2\phi dxdt\\
=&\int_{\mathbb{R}^3}|u_0|^2\phi dx+\int_{0}^t\int_{\mathbb{R}^3}|u|^2(\partial_t\phi+\triangle\phi)dxdt+\int_{0}^t\int_{\mathbb{R}^3}(|u^2|+2P)u\cdot\nabla\phi
dxdt~for~all~t\in[0,T]\notag
\end{align}where $\phi\in C^\infty_0([0,T]\times\mathbb{R}^3)$. Let $\phi(t,x)=\eta(x)$ be a cut-off function which equals $1$ in $B_R$ and vanishes outside of $B_{2R}$ for $R>0$. Then we obtain that
\begin{align}
&\int_{\mathbb{R}^3}|u(t)|^2\eta dx+2\int_{0}^t\int_{\mathbb{R}^3}|\nabla u|^2\eta dxdt\\
=&\int_{\mathbb{R}^3}|u_0|^2\eta dx+\int_{0}^t\int_{\mathbb{R}^3}|u|^2\triangle\eta dxdt+\int_{0}^t\int_{\mathbb{R}^3}(|u^2|+2P)u\cdot\nabla\eta
dxdt~for~all~t\in[0,T]\notag
\end{align}

Noticing that
\begin{align}
|\int_{0}^t\int_{\mathbb{R}^3}|u|^2\triangle\eta dxdt|&\leq C|T|\sup_{t\in[0,T)}||u(t)||^2_{L^2}\frac{1}{R^2},\\
|\int_{0}^t\int_{\mathbb{R}^3}(|u^2|+2P)u\cdot\nabla\eta dxdt|&\leq C(||u||^3_{L^3([0,T];L^3(\mathbb{R}^3))}+||P||^\frac{3}{2}_{L^\frac{3}{2}([0,T];L^\frac{3}{2}(\mathbb{R}^3))})
\frac{1}{R}.
\end{align} Taking $R\rightarrow\infty$, we deduce that from (3.8)
\begin{align}
\int_{\mathbb{R}^3}|u(t)|^2dx+2\int_{0}^t\int_{\mathbb{R}^3}|\nabla u|^2dxdt=\int_{\mathbb{R}^3}|u_0|^2 dx.
\end{align}
\end{proof}

{\bf We now prove Theorem \ref{main4}}:
\begin{proof}
Fix $z\in(0,T]\times\mathbb{R}^3$. From Lemma \ref{epsilon}, there exists $c>0$ and $0<r^*<d_z$ such that if $||u||_{L^{2,\infty}([0,T];BMO(\mathbb{R}^3))}\leq c$ then it holds
\begin{align}
C(u,r,z)+D(P,r,z)<\varepsilon_0~for~0<r<r^*.
\end{align}By Proposition \ref{p2}, $u$ is regular on $[0,T]\times\mathbb{R}^3$. This complete the proof of Theorem \ref{main4}.
\end{proof}

{\bf We now turn to the proof of Theorem \ref{main5}}, this proof is based on the uniform bounds on invariant quantities established in Lemma \ref{ubi} and a standard blow-up argument.
\begin{proof}
If the conclusion of the theorem is false, then there exists a constant $M$ and a sequence $(u^k,P^k)$ whose elements are suitable weak solutions of \eqref{NS} in $[-1,0]\times\mathbb{R}^3$ and satisfy
\begin{align*}
||u^k_3||_{L^{2,\infty}([-1,0];BMO(\mathbb{R}^3))}\leq M,~||u^k_h||_{L^{2,\infty}([-1,0];BMO(\mathbb{R}^3))}\leq\frac{1}{k}.
\end{align*}Furthermore, we can assume $u^k$ is singular at $(0,0)$. According to Lemma \ref{ubi}, there exists a sequence of $\{r_k\}$ satisfying
\begin{align*}
C(u^k,r)+D(P^k,r)\leq C(M)~for~0<r\leq r_k.
\end{align*}
Set $v^k=r_ku(r^2_kt,r_kx)$ and $\Pi^k=r^2_kP(r^2_kt,r_kx)$, then it is clear that $(v^k,\Pi^k)$ are suitable weak solutions to \eqref{NS} on $Q$ and satisfy
\begin{align*}
||v^k_3||_{L^{2,\infty}([-1,0];BMO(B))}\leq M,&~||v^k_h||_{L^{2,\infty}([-1,0];BMO(B))}\leq\frac{1}{k};\\
C(v^k,1)+&D(\Pi^k,1)\leq C(M).
\end{align*}
By the local energy inequality \eqref{le}, we obtain
\begin{align*}
||\partial_t v^k||_{L^\frac{3}{2}}([-1,0];H^{-2}(B_{\frac{3}{4}}))+||v^k||_{L^\infty([-1,0];L^2(B_{\frac{3}{4}}))}+||\nabla v^k||_{L^2([-1,0];L^2(B_{\frac{3}{4}}))}
\leq C(M).
\end{align*} According to the well-known Aubin-Lions lemma and Proposition \ref{p1}, by passing to a subsequence if necessary, we may deduce that there exists a suitable weak solution $(v,\Pi)$ to \eqref{NS} such that
\begin{align*}
v^k\rightarrow v~in~L^3(Q_{\frac{3}{4}}),&~v^k\rightharpoonup v~\in~L^\infty([-\frac{9}{16},0];L^2(B_\frac{3}{4})),\\~\Pi^k\rightharpoonup \Pi~in~L^\frac{3}{2}(Q_{\frac{3}{4}}),&~\nabla v^k\rightharpoonup\nabla v~in~L^2(Q_{\frac{3}{4}})
\end{align*}as $k\to\infty$. In particular, we have by Fatou's lemma
\begin{align*}
||v_h||_{L^{2,\infty}([-1,0];BMO(B_\frac{3}{4}))}\leq\liminf_{k\to\infty}
||v^k_h||_{L^{2,\infty}([-1,0];BMO(B_\frac{3}{4}))}=0.
\end{align*} Therefore, we get $v_h=b(t)$ with $b(t)\in L^\infty[-\frac{9}{16},0]$. Then by $\nabla\cdot v=0$, we obtain $\partial_3 v_3=0$. Thus, we deduce that $(v,\Pi)$ satisfies
\begin{equation}
\left\{\begin{array}{ll}
\partial_t v-\Delta v+\nabla \Pi=-(b_1(t),b_2(t),0)\cdot\nabla(0,0,v_3)~in~Q_{\frac{3}{4}},\\
\nabla\cdot v=0.
\end{array}\right.
\end{equation}By the classical result concerning linear Stokes equations \cite{Ladyzhenskaya1} and the fact $b(t)\in L^\infty[-\frac{9}{16},0]$, we have that $|v|\leq C(M)$ in $Q_\frac{1}{2}$. However, $(0,0)$ is a singular point of $v^k$, hence, it is holding by using Proposition \ref{p2}, \eqref{D} and the fact $v^k\to v~in~L^3(Q_{\frac{3}{4}})$
\begin{align}
\varepsilon_0&<\liminf_{k\to\infty}\frac{1}{r^2}\int_{Q_r}(|v^k|^3+|\Pi^k|^\frac{3}{2})dxdt\\
&\leq \liminf_{k\to\infty}(C(v,r)+\frac{r}{\rho}D(\Pi^k,\rho)+\frac{\rho^2}{r^2}C(v^k,\rho))\notag\\
&=C(v,r)+\frac{\rho^2}{r^2}C(v,\rho)+\liminf_{k\to\infty}\frac{r}{\rho}D(\Pi^k,\rho)\notag
\end{align}for $0<2r<\rho<\frac{1}{2}$. From Lemma \ref{ubi} and the fact $|v|\leq C(M)$ in $Q_\frac{1}{2}$, we get $D(\Pi^k,\rho)\leq C(M)$ and
\begin{align*}
\varepsilon_0<C(M)(r^3+\frac{\rho^5}{r^2}+\frac{r}{\rho}).
\end{align*} By taking $\rho=\sqrt{r}$, we infer
\begin{align*}
\varepsilon_0<C(M)r^\frac{1}{2}.
\end{align*} which, for sufficiently small $r$, is a contradiction.
\end{proof}

{\bf Finally, we prove Proposition \ref{prop}}.
\begin{proof}
Firstly, a direct computation shows that
\begin{align}
||u(t)||^2_{L^2}\sim\sqrt{T-t},\quad ||\nabla u(t)||^2_{L^2}\sim\frac{1}{\sqrt{T-t}}.
\end{align}

Secondly, we observe that $||u(\lambda x)||_{L^\infty}=||u(x)||_{L^\infty}$ and $||u(\lambda x)||_{BMO}=||u(x)||_{BMO}$. It is clear that
\begin{align*}
||u(t)||_{L^\infty}=&\frac{1}{\sqrt{T-t}}||\mathbb{P}(|x|^{\sqrt{T-t}}(-\ln|x|)^s \phi(x))||_{L^\infty},\\
||u(t)||_{BMO}=&\frac{1}{\sqrt{T-t}}||\mathbb{P}(|x|^{\sqrt{T-t}}(-\ln|x|)^s \phi(x))||_{BMO}.
\end{align*}
Let $v=\frac{1}{\sqrt{T-t}}|x|^{\sqrt{T-t}}(-\ln|x|)^{s}\phi(x)$, it is clear that $||u||_{L^\infty}=||\mathbb{P}v||_{L^\infty}$ and $||u||_{BMO}=||\mathbb{P}v||_{BMO}$.

We first show $||\sqrt{T-t}\nabla v||_{L^3(\mathbb{R}^3)}$ is bounded. Direct computations show that
\begin{align*}
&\int_{\mathbb{R}^3}|\sqrt{T-t}\nabla v|^3dx\\\leq& C(\int_{\mathbb{R}^3}(T-t)^{\frac{3}{2}}|x|^{3\sqrt{T-t}-3}(-\ln|x|)^{3s}\phi^3(x)dx+\int_{\mathbb{R}^3}|x|^{3\sqrt{T-t}-3}(-\ln|x|)^{3s-3}\phi^3(x)dx\notag\\&+\int_{\mathbb{R}^3}|x|^{3\sqrt{T-t}}(-\ln|x|)^{3s}|\nabla\phi|^3dx)\notag\\
\leq& C\frac{4\pi}{3}((T-t)^{\frac{3}{2}}\int_0^\frac{3}{4} r^{3\sqrt{T-t}-1}(-\ln r)^{3s}dr+\int_0^\frac{3}{4}
r^{3\sqrt{T-t}-1}(-\ln r)^{3s-3}dr+\int_0^\frac{3}{4} r^{3\sqrt{T-t}+2}(-\ln r)^{3s}dr)\notag\\
\leq& C(I_1+I_2+I_3)\notag.
\end{align*}

For $I_1$, it holds by setting $\eta=-3\sqrt{T-t}\ln r$ that
\begin{align}\label{I_1}
I_1=&(T-t)^{\frac{3}{2}}\int_{0}^{\frac{3}{4}}r^{3\sqrt{T-t}-1}(-\ln r)^{3s}dr\\
=&(T-t)^{\frac{3}{2}}\int_{3\sqrt{T-t}\ln\frac{4}{3}}^{\infty}e^{-\eta}(\frac{\eta}{3\sqrt{T-t}})^{3s}\frac{d\eta}{3\sqrt{T-t}}\notag\\
\leq& 3^{-3s-1}(T-t)^{\frac{3-3s-1}{2}}\int_{0}^{\infty}e^{-\eta}\eta^{3s}d\eta\notag\\
=&3^{-3s-1}(T-t)^{\frac{3-3s-1}{2}}\Gamma(3s+1)\notag.
\end{align}
For $I_2$, we have that
\begin{align}\label{I_2}
I_2=&\int_{0}^{\frac{3}{4}}r^{3\sqrt{T-t}-1}(-\ln r)^{3s-3}dr\\
\leq &\int_{0}^{\frac{3}{4}}r^{-1}(-\ln r)^{3s-3}dr\notag\\
\leq &\frac{1}{2-3s}(\ln 4-\ln 3)^{3s-2}.\notag
\end{align}

It is obviously that
\begin{align}\label{I_3}
I_3=\int_0^\frac{3}{4} r^{3\sqrt{T-t}+2}(-\ln r)^{3s}dr\leq C.
\end{align} Combining with the estimates( \ref{I_1})-(\ref{I_3}), we obtain
\begin{align}
\int_{\mathbb{R}^3}|\sqrt{T-t}\nabla v|^3dx\leq C
\end{align} for $0<s<\frac{2}{3}$.

Using the above estimate and the boundedness of $\mathbb{P}$ on $L^3$, we get
\begin{align}
||\sqrt{T-t}\nabla u(t)||_{L^3}=&||\sqrt{T-t}\mathbb{P}\nabla v||_{L^3}\leq C_1||\sqrt{T-t}\nabla v||_{L^3}\leq C.\notag
\end{align}
We thus obtain by using the fact \.{W}$^{1,3}(\mathbb{R}^3)\hookrightarrow BMO(\mathbb{R}^3)$ that
\begin{align}
||u(t)||_{BMO(\mathbb{R}^3)}\leq\frac{C}{\sqrt{T-t}}.
\end{align}

We now consider the estimate of $||u(t)||_{L^\infty}$. By the definition of Leray projection, we see that
\begin{align}
u_i=(\mathbb{P}v)_i=v_{i}-\mathbb{R}_i\mathbb{R}_jv_{j}
\end{align}where $\mathbb{R}_i$ is the Riesz transform.

On the one hand, we deduce by choosing $|x|=e^{-\frac{s}{\sqrt{T-t}}}$
\begin{align}
||v(t)||_{L^\infty}\sim\frac{1}{(T-t)^\frac{1+s}{2}}.
\end{align}On the other hand,
by the definition of Riesz transform, we have
\begin{align}
\mathbb{R}_i\mathbb{R}_jv_{j}(0)=&\frac{1}{\sqrt{T-t}}\int_{\mathbb{R}^3}\frac{y_iy_j}{|y|^5}|y|^{\sqrt{T-t}}(-\ln|y|)^{s}\phi_j(|y|)dy\\
=&\frac{1}{\sqrt{T-t}}\int_{S^2}\omega_i \omega_j d\omega\int_{0}^{1}r^{\sqrt{T-t}-1}(-\ln r)^{s}\phi_j(r)dr\notag\\
=&\frac{1}{(T-t)^{\frac{1}{2}}}\int_{S^2}|\omega_i|^2d\omega\int_{0}^{1}r^{\sqrt{T-t}-1}(-\ln r)^{s}\phi_j(r)dr\notag\\
\geq&\frac{1}{(T-t)^{\frac{1}{2}}}\int_{S^2}|\omega_i|^2d\omega\int_{0}^{\frac{1}{2}}r^{\sqrt{T-t}-1}(-\ln r)^{s}dr\notag\\
=&\frac{1}{(T-t)^{1+\frac{s}{2}}}\frac{4\pi}{9}\int_{\sqrt{T-t}\ln2}^\infty e^{-\tau}\tau^{s}d\tau\notag\\
\geq& \frac{2\pi}{9}\frac{1}{(T-t)^{1+\frac{s}{2}}}\Gamma(1+s).\notag
\end{align} Collecting (3.21)-(3.23) implies
\begin{align}
||u(t)||_{L^\infty}\geq\frac{1}{(T-t)^{1+\frac{s}{2}}}.
\end{align} Combining with (3.15), (3.20), (3.24) completes the proof of Proposition \ref{prop}.
\end{proof}
\bigskip
\noindent\textbf{Acknowledgments}\textit{\ Tan was supported by the Construct Program of the Key Discipline in Hunan Province and the National Natural Science Foundation of China (No. 11871209).
Yin was partially supported by the National Natural Science Foundation of China (No. 11671407), the Macao Science and Technology Development Fund (No. 0091/2018/A3), and Guangdong Province of China Special Support Program (No. 8-2015), and the key project of the Natural Science Foundation of Guangdong province (No. 2016A030311004). }

\end{document}